\documentclass[a4paper, reqno]{amsart}

\title[{}]{Cosets from equivariant $\W$-algebras}

\author{Thomas Creutzig}
\address{Department of Mathematical and Statistical Sciences, University of Alberta, 632 CAB, Edmonton, Alberta, Canada T6G 2G1}
\email{creutzig@ualberta.ca}

\author{Shigenori Nakatsuka}
\address{Department of Mathematical and Statistical Sciences, University of Alberta, 632 CAB, Edmonton, Alberta, Canada T6G 2G1}
\email{shigenori.nakatsuka@ualberta.ca}

\usepackage{latexsym}
\usepackage{amsmath}
\usepackage{amsfonts}
\usepackage{amssymb}
\usepackage{amsxtra}
\usepackage{mathrsfs}
\usepackage{eucal}
\usepackage[all]{xy}
\usepackage{color}
\usepackage{braket}
\usepackage{graphics, setspace}
\usepackage{etoolbox, textcomp}
\usepackage{comment}
\usepackage{changepage}
\usepackage{xcolor}
\definecolor{rouge}{rgb}{0.85,0.1,.4}
\definecolor{bleu}{rgb}{0.1,0.2,0.9}
\definecolor{violet}{rgb}{0.7,0,0.8}
\usepackage[colorlinks=true,linkcolor=bleu,urlcolor=violet,citecolor=rouge]{hyperref}
\usepackage{cleveref}

\newtheorem{definition}{Definition}[section]
\newtheorem{proposition}[definition]{Proposition}
\newtheorem{theorem}[definition]{Theorem}
\newtheorem{corollary}[definition]{Corollary}
\newtheorem{lemma}[definition]{Lemma}

\numberwithin{equation}{section}

\newcommand{\Z}{\mathbb{Z}}

\newcommand{\C}{\mathbb{C}}

\newcommand{\Com}{\operatorname{Com}}

\newcommand{\K}{\mathbf{k}}
\newcommand{\A}{\mathbb{A}}

\newcommand{\Q}{\mathbb{Q}}
\newcommand{\F}{\mathbb{F}}
\newcommand{\g}{\mathfrak{g}}
\newcommand{\ga}{\mathfrak{a}}


\newcommand{\W}{\mathcal{W}}



\newcommand\doi[2]{\href{http://dx.doi.org/#1}{#2}}

\begin{document}
\maketitle

\begin{abstract}
The equivariant $\mathcal W$-algebra of a simple Lie algebra $\g$ is a BRST reduction of the algebra of chiral differential operators on the Lie group of $\g$. 
We construct a family of vertex algebras $A[\g, \kappa, n]$ as subalgebras of the equivariant $\mathcal W$-algebra of $\g$ tensored with the integrable affine vertex algebra $L_n(\check\g)$ of the Langlands dual Lie algebra $\check\g$ at level $n\in \Z_{>0}$. They are conformal extensions of  the tensor product of an affine vertex algebra and the principal $\W$-algebra whose levels satisfy a specific relation.

When $\g$ is of type $ADE$, we identify $A[\g, \kappa, 1]$ with the affine vertex algebra $V^{\kappa -1}(\g) \otimes L_1(\g)$, giving a new and efficient proof of the coset realization of the principal $\mathcal W$-algebras of type $ADE$. 
\end{abstract}

\section{Motivation and Results}\label{sec: motivation}

Let $G$ be a connected simply-connected simple algebraic group with Lie algebra $\g$. Then the space $\C[G]$ of regular functions decomposes into
\[
\C[G] \simeq \bigoplus_{\lambda \in P^+}  L_\lambda \otimes L_{\lambda^*}
\]
as a $\g \oplus \g$-module under the actions of the left and right invariant vector fields. 
Here $L_\lambda$ denotes the integrable $\g$-module of highest weight $\lambda$, $\lambda^*$ the highest weight of its dual representation, and $P^+$ the set of dominant integral weights.
This result can be chiralized by using the algebra of chiral differential operators over $G$ 
$$\mathcal{D}_{G, \kappa}^{\mathrm{ch}}=U(\hat{\g}_{\kappa})\otimes_{U(\g[\![t]\!])}\C[J_\infty G],$$
\cite{AG,GMS1,GMS2}. 
Here $J_\infty G$ is the jet scheme of $G$, on which $\g[\![t]\!]$ acts as the left invariant vector fields and $\hat{\g}_{\kappa}$ is the affine algebra of $\g$ at level $\kappa\in \C$.
This is a deformable family of vertex algebras whose top subspace is $\C[G]$.
The $\g$-action on $\C[G]$ via the right invariant vector fields is chiralized to an another action of $\hat{\g}_{\kappa^*}$ on $\mathcal{D}_{G, \kappa}^{\mathrm{ch}}$ at another specific level $\kappa^*$.
Let $V^\kappa(\g)$ denote the universal affine vertex algebra associated with $\g$ at level $\kappa\in \C$ and $\mathbb{V}^\kappa_\lambda$ the Weyl module of $V^\kappa(\g)$ whose top space is $L_\lambda$. Let $h^\vee$ denote the dual Coxeter number of $\g$, then for irrational levels $\kappa$ these two actions of the affine algebra decompose $\mathcal{D}_{G, \kappa}^{\mathrm{ch}}$ into
\begin{align}\nonumber
\mathcal{D}_{G, \kappa}^{\mathrm{ch}}\simeq \bigoplus_{\lambda\in P^+} \mathbb{V}^{\kappa}_\lambda\otimes \mathbb{V}^{\kappa^*}_{\lambda^*},\quad \frac{1}{\kappa+h^\vee}+\frac{1}{\kappa^*+h^\vee}=0.
\end{align}

There is a variant of $\mathcal{D}_{G, \kappa}^{\mathrm{ch}}$, called the quantum geometric Langlands kernel VOAs $\mathfrak{A}^{(n)}[\g,\kappa]$ \cite{CG}. These are families of vertex algebras, similar to $\mathcal{D}_{G, \kappa}^{\mathrm{ch}}$, but the inverses of the shifted levels of $\kappa, \kappa^*$ do not add up to zero but to integers in general. The existence of these algebras has been established by Moriwaki \cite{M} and the name has been partially justified as they serve as kernels for convolution operations mapping $\W$-algebras to the dual $\W$-superalgebras \cite{CLNS} as suggested by $S$-duality \cite[Section 7.3]{CG}.

A $\W$-algebra is a vertex algebra obtained from $V^\kappa(\g)$ via the BRST reduction $H_{\mathrm{DS},f}$, parametrized by nilpotent elements $f$ in $\g$ and in this work we are concerned with the principal $\W$-algebras, $\W^\kappa(\g)$, associated with the principal nilpotent element $f$ and we abbreviate $H_{\mathrm{DS},f}=H_{\mathrm{DS}}$ from now on.
The equivariant $\W$-algebra is realized as the BRST reduction of $\mathcal{D}_{G, \kappa}^{\mathrm{ch}}$ defined through the subalgebra $V^{\kappa}(\g)$.
By introducing $\W^{\kappa}(\g)$-modules $T^{\kappa}_{\lambda,0}:=H_{\mathrm{DS}}^0(\mathbb{V}^{\kappa }_{\lambda})$, we have for irrational levels $\kappa$
\begin{align}\nonumber
\mathcal{D}_{\kappa,G}^{W}\simeq \bigoplus_{\lambda\in P^+} T^{\kappa}_{\lambda,0}\otimes \mathbb{V}^{\kappa^*}_{\lambda^*}.
\end{align}

Two important results on $\W$-algebras have been established recently, motivated in part by $S$-duality in the physics.
The first one is the Arakawa--Frenkel duality \cite{AF}:
Arakawa and Frenkel introduced a variant of the BRST reductions where the differential is twisted by an automorphism associated to a coweight. 
They then proved isomorphisms between modules thus defined, generalizing the Feigin--Frenkel duality \cite{FF1} for algebras themselves, see Section \ref{sec:Walgebras}.
The second one is the Urod or translation property of the functor $H_{\mathrm{DS}}$ \cite{ACF}, that is, $H_{\mathrm{DS}}$ commutes with tensoring with the integrable representations of the affine algebra. 

These two results are the main ingredients in the proof of our first main theorem, which states the existence of the $\mathfrak{A}^{(n)}[\g,\kappa]$-analogue for $\mathcal{D}_{\kappa,G}^{W}$ with $G$ replaced by the Adjoint type $\mathrm{Ad}(G)$. We formulate it as a deformable family of vertex algebras depending on $\kappa$, i.e. a vertex algebra over a localization of the polynomial ring $\A=\C[\K]$ where $\K$ plays the role of the level $\kappa$, see Section \ref{Deformable family} for the precise definition.
Denote by $r^\vee$ the lacity of $\g$, by $Q$ its root lattice and set $Q^+ := Q \cap P^+$.
\begin{theorem}\label{thm: kernel object}
For $n \in \mathbb Z_{>0}$, there exists a deformable family of vertex algebras $A[\g, \kappa, n]$ over $\C$, such that at irrational levels $\kappa$
 $A[\g, \kappa, n]$ is simple and decomposes as a $V^{\kappa}(\g) \otimes \W^{\kappa^*}(\g)$-module into
\[
A[\g, \kappa, n] \simeq  \bigoplus_{\lambda \in Q^+} \mathbb{V}^{\kappa}_\lambda \otimes T^{\kappa^*}_{\lambda^*,0}.
\]
with $\kappa^*\in \C$ defined by the relation $ \frac{1}{\kappa+h^\vee} + \frac{1}{\kappa^*+h^\vee} = r^\vee n$.
\end{theorem}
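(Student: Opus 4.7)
The plan is to realize $A[\g,\kappa,n]$ as a subalgebra of $\mathcal{D}^{W}_{\kappa_0,\mathrm{Ad}(G)}\otimes L_n(\check\g)$ for a level $\kappa_0=\kappa_0(\kappa,n)$ chosen so that the diagonal affine action produced after tensoring has level $\kappa$. The use of the adjoint form $\mathrm{Ad}(G)$ (rather than $G$) is what restricts the summation from $P^+$ down to $Q^+$, matching the theorem, and the deformable-family structure is inherited from that of $\mathcal{D}^{W}_{\K,\mathrm{Ad}(G)}$, itself a deformable family over a localization of $\A=\C[\K]$.

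For the decomposition itself, I would start from
\[
\mathcal{D}^{W}_{\kappa_0,\mathrm{Ad}(G)}\otimes L_n(\check\g)\simeq\bigoplus_{\lambda\in Q^+}T^{\kappa_0}_{\lambda,0}\otimes\bigl(\mathbb{V}^{\kappa_0^*}_{\lambda^*}\otimes L_n(\check\g)\bigr),
\]
and analyse each summand. On the factor $\mathbb{V}^{\kappa_0^*}_{\lambda^*}\otimes L_n(\check\g)$, I would apply the Urod (translation) property of Arakawa--Creutzig--Frenkel, which asserts that Drinfeld--Sokolov reduction commutes with tensoring by integrable modules; this effectively shifts the level by an amount controlled by the relative normalization of invariant forms on $\g$ and $\check\g$, producing a factor of $r^\vee$ and replacing $\mathbb{V}^{\kappa_0^*}_{\lambda^*}$ by $\mathbb{V}^{\kappa}_{\lambda}$ at the cost of an extra decoupling Heisenberg/Urod factor. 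On the other tensor factor, I would use the Arakawa--Frenkel duality to identify $T^{\kappa_0}_{\lambda,0}$ with $T^{\kappa^*}_{\lambda^*,0}$ for the Langlands-dual level $\kappa^*$. Keeping track of the level arithmetic yields the stated relation $\tfrac{1}{\kappa+h^\vee}+\tfrac{1}{\kappa^*+h^\vee}=r^\vee n$, and defines $A[\g,\kappa,n]$ as the commutant of the Urod factor inside $\mathcal{D}^{W}_{\kappa_0,\mathrm{Ad}(G)}\otimes L_n(\check\g)$.

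Simplicity at irrational $\kappa$ is then read off from the multiplicity-free nature of the resulting decomposition: at such levels the Weyl modules $\mathbb{V}^\kappa_\lambda$ are irreducible and pairwise non-isomorphic, and the $T^{\kappa^*}_{\lambda^*,0}$ are simple $\W^{\kappa^*}(\g)$-modules in the corresponding non-degenerate regime, so any non-trivial ideal of $A[\g,\kappa,n]$ must respect the isotypic decomposition and is forced to vanish.

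The hard part will be the Urod step in non-$ADE$ types, where the interaction between the DS reduction (for $\g$) and the tensoring with the Langlands-dual module $L_n(\check\g)$ must be controlled carefully: one has to check both that the Urod/Heisenberg factor decouples as a genuine commutant, and that the relative normalization of invariant forms between $\g$ and $\check\g$ produces exactly the factor $r^\vee$ in the level relation. This bookkeeping, feeding into Arakawa--Frenkel duality via its non-symmetric level shift, is expected to be the main technical point.
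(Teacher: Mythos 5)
Your overall architecture matches the paper's: the ambient object is $\mathcal{D}^{W}_{\mathrm{Ad}(G)}\otimes L_n(\check\g)$, the adjoint form is what cuts the sum down to $Q^+$, the tools are the Urod property plus Arakawa--Frenkel duality, and $A[\g,\kappa,n]$ is extracted as a commutant of a residual factor (in the paper, the commutant of the coset module $C_{0}$ coming from the GKO-type decomposition of $V^{\check\kappa}(\check\g)\otimes L_n(\check\g)$, not of a Heisenberg algebra). However, the central step as you describe it does not go through. You propose to apply the Urod theorem to the factor $\mathbb{V}^{\kappa_0^*}_{\lambda^*}\otimes L_n(\check\g)$. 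This is the wrong tensor factor and the wrong side of the duality: no Drinfeld--Sokolov reduction is being applied to the Weyl modules $\mathbb{V}^{\kappa^*}_{\lambda^*}$ at any point (they survive untouched into the final answer), and, more seriously, the Urod theorem of \cite{ACF} concerns $H_{\mathrm{DS}}(M\otimes L)$ where $L$ is an integrable module over the \emph{same} affine algebra whose reduction is being taken. Outside type $ADE$, $L_n(\check\g)$ is not a $\widehat{\g}$-module, so there is no diagonal action on $\mathbb{V}^{\kappa_0^*}_{\lambda^*}(\g)\otimes L_n(\check\g)$ and no version of the Urod property that applies to it. You flag exactly this as ``the hard part,'' but it is not bookkeeping; it is the structural crux, and your proposal contains no mechanism for it. The paper's resolution is to reverse the order of operations: first use Feigin--Frenkel/Arakawa--Frenkel duality to rewrite $T^{\kappa}_{\lambda,0}\simeq \check T^{\check\kappa}_{0,\lambda}=H^0_{\mathrm{DS},\lambda}(V^{\check\kappa}(\check\g))$, so that one is reducing a $\check\g$-object; then the Urod property applies verbatim to $V^{\check\kappa}(\check\g)\otimes L_n(\check\g)$, whose GKO decomposition into Weyl modules $\mathbb{V}^{\check\kappa+n}_{\check\mu}$ times coset modules $C_{\check\mu}$ feeds into the twisted reduction; finally a second application of Arakawa--Frenkel duality returns everything to the $\g$-side at the shifted level $\varkappa$ with $r^\vee(\check\kappa+n+\check h^\vee)(\varkappa+h^\vee)=1$, which is where the factor $r^\vee n$ in the level relation actually comes from.

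Two further points are underargued. For simplicity at irrational levels, multiplicity-freeness together with simplicity of each $\mathbb{V}^{\kappa}_\lambda\otimes T^{\kappa^*}_{\lambda^*,0}$ only shows that any ideal is a sum of isotypic components; it does not rule out a proper nonzero such sum. The paper closes this by invoking the simplicity of $\mathcal{D}^{W}_{\kappa,\mathrm{Ad}(G)}$ (hence of the ambient algebra after tensoring with $L_n(\check\g)$) and a commutant criterion from \cite{CGN}. And the deformable-family structure is not simply ``inherited'': one needs an integral form of the equivariant $\W$-algebra (Theorem \ref{integral form of equiv W}, which itself requires the cohomology vanishing and $\A$-freeness statements of Proposition \ref{cohomology vanishing}), after which $A[\g,\kappa^*,n]_{\A}$ is defined as an intersection inside the $\A$-form, shown to be $\A$-free because $\A$ is a P.I.D., and identified fiberwise by a character comparison. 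These are fixable but nontrivial omissions on top of the main gap above.
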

\noindent
See Section \ref{proof1} for the explicit construction. In physics, vertex algebras appear at corners of topological boundary conditions \cite{GR, CG}.
For $\g = \mathfrak{sl}_m$ the algebra $A[\g, \kappa, n]$ is the $\mathcal D_{m, n}$-corner VOA discussed in Section 6.2.2 of \cite{CDGG}. Conjecturally a $\kappa \rightarrow \infty$ limit of these algebras are Feigin--Tipunin algebras \cite{FT} times a big center. It is desirable to study large level limits of $A[\g, \kappa, n]$ and in particular to settle this conjecture.
Note that deformable families of vertex algebras were introduced in order to study large level limits \cite{CL0, CL4}.

Let $L_\kappa(\g)$ and $L[\g, \kappa, n]$ denote the unique simple quotients of $V^\kappa(\g)$ and $A[\g, \kappa, n]$.
\begin{corollary}\label{cor} ${}$\\
(i) Let $\kappa$ be admissible and $\kappa^*$ non-degenerate (co)principal admissible, then
 $L[\g, \kappa, n]$ is a conformal extension of $L_\kappa(\g)\otimes \W_{\kappa^*}(\g)$ with
$\Com(L_\kappa(\g), L[\g, \kappa, n])\simeq \W_{\kappa^*}(\g)$. \\
(ii) $L[\g, \kappa, n]$ is strongly rational for $\kappa \in \mathbb Z_{>0}$.
\end{corollary}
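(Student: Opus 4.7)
The plan is to transfer the decomposition of Theorem \ref{thm: kernel object}, established at irrational $\kappa$, to the admissible locus via specialization of the deformable family, and then analyze the simple quotient.

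For part (i), I would start from
\[
A[\g,\kappa,n]\simeq \bigoplus_{\lambda\in Q^+}\mathbb{V}^\kappa_\lambda\otimes T^{\kappa^*}_{\lambda^*,0}
\]
and specialize $\kappa$ to the given admissible level with $\kappa^*$ non-degenerate (co)principal admissible. The simple quotient $L[\g,\kappa,n]$ is a simple vertex algebra into which $V^\kappa(\g)$ and $\W^{\kappa^*}(\g)$ map; by simplicity these images are forced to be their unique simple quotients $L_\kappa(\g)$ and $\W_{\kappa^*}(\g)$. Extracting the $\lambda=0$ summand after passing to the simple quotient then gives $L_\kappa(\g)\otimes \W_{\kappa^*}(\g)$ as a conformal subalgebra of $L[\g,\kappa,n]$. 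For the commutant, I would argue that each surviving simple subquotient of $L[\g,\kappa,n]$ has the form $L_\kappa(\lambda)\otimes T_\lambda$, and for $\lambda\ne 0$ the $\g\otimes \C[t]$-invariants of $L_\kappa(\lambda)$ vanish; hence only the $\lambda=0$ piece contributes to $\Com(L_\kappa(\g), L[\g,\kappa,n])$, yielding $\W_{\kappa^*}(\g)$.

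For part (ii), when $\kappa\in\Z_{>0}$ the level relation forces
\[
\kappa^*+h^\vee=\frac{\kappa+h^\vee}{r^\vee n(\kappa+h^\vee)-1},
\]
a ratio of coprime positive integers meeting the bounds required to be a non-degenerate principal admissible level. Arakawa's rationality theorem then gives that $\W_{\kappa^*}(\g)$ is strongly rational, and $L_\kappa(\g)$ is strongly rational for integrable $\kappa$; so is the tensor product. By part (i), $L[\g,\kappa,n]$ is a simple conformal extension of this strongly rational algebra. In the modular tensor setting such simple extensions involve only finitely many summands and are themselves strongly rational by Kirillov--Ostrik and Huang, giving (ii).

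The main obstacle will be the identification of the simple quotient at admissible level: one must verify that the specialization of the irrational-level decomposition yields precisely a simple extension, identifying the maximal ideal correctly and matching each $T^{\kappa^*}_{\lambda^*,0}$ (or its surviving simple subquotient) with the appropriate simple $\W_{\kappa^*}(\g)$-module. This rests on the simplicity and exactness properties of $H_{\mathrm{DS}}^0$ on admissible Weyl modules (Arakawa) and on the Arakawa--Frenkel duality underlying the proof of Theorem \ref{thm: kernel object}.
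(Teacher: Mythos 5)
Your proposal correctly identifies the overall strategy (specialize the Theorem \ref{thm: kernel object} decomposition to admissible levels, pass to the simple quotient, and use Arakawa's rationality of $\W$-algebras plus extension theory for (ii)), but the central step in (i) is glossed over in a way that does not hold up. You assert that because $L[\g,\kappa,n]$ is simple, the images of $V^\kappa(\g)$ and $\W^{\kappa^*}(\g)$ inside it ``are forced to be their unique simple quotients.'' This is false in general: a vertex subalgebra of a simple vertex algebra need not be simple (its proper ideals need not generate proper ideals of the ambient algebra). In fact, establishing simplicity of these two images is precisely the content of the paper's proof, and it is not a triviality. The paper proceeds in a specific order: (a) compute the conformal weights $h(\lambda)$ in \eqref{conformal weight in the decomposition} to see that only $\lambda=0$ sits in degree $0$, so $L[\g,\kappa,n]$ is $\tfrac{1}{2}\Z_{\geq 0}$-graded; (b) use the coset theorem \cite[Theorem 8.1]{CL0} (valid at $\kappa\in\R_{\geq 0}$) to identify $\Com(\widetilde V^\kappa(\g), L[\g,\kappa,n])=\widetilde\W^{\kappa^*}(\g)$; (c) deduce simplicity of this commutant from simplicity of $L[\g,\kappa,n]$ via \cite[Theorem 4.1]{ACK}, and then use Arakawa's rationality \cite{A2,A3}; (d) invoke \cite[Lemma 2.1]{ACKL} for simplicity of the reverse commutant and \cite[Theorem 3.4]{AEM} to conclude $\widetilde V^\kappa(\g)=L_\kappa(\g)$. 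Your suggested substitute, that ``the $\g\otimes\C[t]$-invariants of $L_\kappa(\lambda)$ vanish for $\lambda\neq 0$,'' is in the right spirit but does not by itself yield the identification of the commutant as the simple $\W$-algebra without something like the CL0/ACK/AEM package.

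For (ii), your computation of $\kappa^*+h^\vee$ and the appeal to Arakawa's rationality theorem are correct, and your Kirillov--Ostrik/Huang-style extension argument is close in spirit to the paper. However the paper is more careful about how the extension is organized: it first shows $\Com(\W_{\kappa^*}(\g), L[\g,\kappa,n])$ is a simple conformal extension of $L_\kappa(\g)$ and applies McRae's rationality criterion \cite[Theorem 1.1]{Mc}, which requires positive categorical dimension, and then passes to $L[\g,\kappa,n]$ itself by \cite[Corollary 1.1]{CKM}. Your shortcut of directly declaring the extension of $L_\kappa(\g)\otimes\W_{\kappa^*}(\g)$ strongly rational presupposes that the extension is of the right type (finitely many summands, suitable positivity), which in the paper comes out of the conformal-weight computation and the same coset structure. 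In short: the route is recognizably similar but the hard lemmas guaranteeing simplicity of the subalgebra images and the applicability of the extension-rationality theorems are exactly what you have left out.
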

\begin{proof}
The conformal weight $h(\lambda)$ of the top subspace of $\mathbb{V}^\kappa_\lambda \otimes T^{\kappa^*}_{\lambda^*,0}$ is 
\begin{align}\label{conformal weight in the decomposition}
h(\lambda) =  \frac{(\lambda, \lambda + 2\rho)}{2 (\kappa+h^\vee)} + \frac{(\lambda, \lambda + 2\rho)}{2 (\kappa^*+h^\vee)} - (\lambda, \rho^\vee) = \frac{(\lambda,\lambda) r^\vee n }{2} + (\lambda, n r^\vee \rho -\rho^\vee).
\end{align}
$\rho, \rho^\vee$ are the Weyl and dual Weyl vectors.
Hence, $h(\lambda) > 0$ for $\lambda \in Q^+\setminus \{ 0 \}$ and $L[\g,\kappa,n]$ is a $\frac{1}{2}\Z_{\geq0}$-graded vertex operator algebra.
Let $\widetilde V^\kappa(\g)$ and $\widetilde \W^{\kappa^*}(\g)$ be the images of $V^\kappa(\g)$ and $\W^{\kappa^*}(\g)$ in $L[\g, \kappa, n]$.
It follows from $\kappa\in \mathbb{R}_{\geq0}$ that $\text{Com}(\widetilde V^\kappa(\g), L[\g, \kappa, n]) = \widetilde \W^{\kappa^*}(\g)$ by \cite[Theorem 8.1]{CL0}.  
Then the simplicity of $L[\g,\kappa,n]$ implies $\widetilde \W^{\kappa^*}(\g)$ is simple, i.e $\W_{\kappa^*}(\g)$ by \cite[Theorem 4.1]{ACK}.
Since $\kappa\in \Z_{\geq0}$, $\kappa^*$ is non-degenerate principal or coprincipal admissible and thus $\W_{\kappa^*}(\g)$ is strongly rational \cite{A2, A3}.
Now, by \cite[Lemma 2.1]{ACKL}, $\text{Com}(\W_{\kappa^*}(\g), L[\g, \kappa, n])$ is simple, which implies $\widetilde V^\kappa(\g)=L_\kappa(\g)$ by \cite[Theorem 3.4]{AEM} since the coset is a conformal extension of $\widetilde V^\kappa(\g)$.
Therefore, $\text{Com}(\W_{\kappa^*}(\g), L[\g, \kappa, n])$ is strongly rational as a conformal extension of a strongly rational vertex operator algebra $L_\kappa(\g)$ of positive categorical dimension \cite[Theorem 1.1]{Mc} and so is $L[\g, \kappa, n]$ by \cite[Corollary 1.1]{CKM}
\end{proof}
In 1986, Goddard, Kent and Olive discussed how the Virasoro algebra, that is the $\W$-algebra of $\mathfrak{sl}_2$, is realized as a coset \cite{GKO}. 
The generalization to the principal $\W$-algebras of type $ADE$ is usually referred as the GKO-coset realization of $\W$-algebras. It has been widely used in physics, however it has only recently been proven \cite{ACL}. For type $A$ and $D$ it has then been reproven \cite{CL1, CL2}. 
A main motivation of this work is to give a much shorter proof of this famous theorem:
\begin{theorem}\label{thm: GKO}
Let $\g$ be of type $ADE$. Then $V^{\kappa -1}(\g) \otimes L_1(\g) \simeq A[\g, \kappa, 1]$ as vertex algebras over $\C$ for generic $\kappa\in \C\backslash \Q$.
In particular, $\mathrm{Com}(V^\kappa(\g), V^{\kappa -1}(\g) \otimes L_1(\g)) \simeq \W^{\kappa^*}(\g)$ holds where $\kappa^*$ is defined by $ \frac{1}{\kappa+h^\vee} + \frac{1}{\kappa^*+h^\vee} = 1$.
\end{theorem}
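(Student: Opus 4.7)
The plan is to realize both $V^{\kappa-1}(\g) \otimes L_1(\g)$ and $A[\g, \kappa, 1]$ as the same conformal extension of $V^\kappa(\g) \otimes \W^{\kappa^*}(\g)$ at irrational $\kappa$, and then invoke uniqueness of such extensions. Theorem \ref{thm: kernel object} already supplies the desired decomposition on the right-hand side, so the real task is to show that the left-hand side admits the analogous one.

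Since $\g$ is of $ADE$ type, $r^\vee = 1$ and $\check\g = \g$, so the level relation $\frac{1}{\kappa + h^\vee} + \frac{1}{\kappa^* + h^\vee} = 1$ is precisely the additivity of levels in the diagonal embedding $V^\kappa(\g) \hookrightarrow V^{\kappa-1}(\g) \otimes L_1(\g)$. For irrational $\kappa$ this diagonal action is semisimple, producing a decomposition
\[
V^{\kappa-1}(\g) \otimes L_1(\g) \simeq \bigoplus_{\lambda} \mathbb{V}^\kappa_\lambda \otimes M_\lambda,
\]
where $M_\lambda$ is a module for the coset $C := \Com(V^\kappa(\g), V^{\kappa-1}(\g) \otimes L_1(\g))$. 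Since $L_1(\g)$ is the lattice VOA $V_Q$ and $V^{\kappa-1}(\g)$ has all weights in $Q$, the index runs over $\lambda \in Q^+$.

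Next I would identify the multiplicity spaces by applying $H_{\mathrm{DS}}$ to the $V^\kappa(\g)$ factor. On one hand, the Urod/translation property \cite{ACF} gives $H_{\mathrm{DS}}(V^{\kappa-1}(\g) \otimes L_1(\g)) \simeq \W^{\kappa-1}(\g) \otimes L_1(\g)$; on the other hand, the above decomposition yields $\bigoplus_{\lambda \in Q^+} T^\kappa_{\lambda, 0} \otimes M_\lambda$. Using Arakawa--Frenkel duality \cite{AF} (together with Feigin--Frenkel duality, available in the $ADE$ setting) to identify the $T^\kappa_{\lambda, 0}$ with the dual $\W^{\kappa^*}(\g)$-modules, comparing the two expressions term-by-term forces $M_\lambda \simeq T^{\kappa^*}_{\lambda^*, 0}$ as $\W^{\kappa^*}(\g)$-modules; in particular, $C \simeq \W^{\kappa^*}(\g)$.

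With both $V^{\kappa-1}(\g) \otimes L_1(\g)$ and $A[\g, \kappa, 1]$ realized as simple conformal extensions of $V^\kappa(\g) \otimes \W^{\kappa^*}(\g)$ with the identical branching decomposition $\bigoplus_{\lambda \in Q^+} \mathbb{V}^\kappa_\lambda \otimes T^{\kappa^*}_{\lambda^*, 0}$, the isomorphism over $\C$ follows from the uniqueness of such extensions, since the underlying commutative algebra object in the relevant tensor category of modules (at irrational $\kappa$ the simple summands are typical and pairwise non-isomorphic) is pinned down by its decomposition. The coset statement is then immediate from Theorem \ref{thm: kernel object}. The main obstacle is the identification $M_\lambda \simeq T^{\kappa^*}_{\lambda^*, 0}$: this requires the Arakawa--Frenkel and Feigin--Frenkel dualities at the level of $\W^{\kappa^*}(\g)$-modules rather than merely characters, and careful tracking of the dual-level conventions through the Urod isomorphism.
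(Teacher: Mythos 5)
Your proposal has a genuine gap at the step you yourself flag as ``the main obstacle'': the identification $M_\lambda \simeq T^{\kappa^*}_{\lambda^*,0}$ of the multiplicity spaces in the decomposition of $V^{\kappa-1}(\g)\otimes L_1(\g)$ under the diagonal $V^\kappa(\g)$. This identification \emph{is} the coset realization theorem of \cite{ACL} (equivalently, the statement being proved), so assuming it is circular, and the mechanism you propose for deriving it does not go through as described. Applying the untwisted reduction $H_{\mathrm{DS}}$ to the diagonal action and invoking \cite{ACF} yields $\W^{\kappa-1}(\g)\otimes\,\mathcal U$ on one side and $\bigoplus_\lambda T^\kappa_{\lambda,0}\otimes M_\lambda$ on the other, but the left-hand side carries no a priori decomposition indexed by $\lambda$, so there is nothing to compare ``term-by-term'': extracting the $T^\kappa_{\lambda,0}$-isotypic components of $\W^{\kappa-1}(\g)\otimes\mathcal U$ as a $\W^\kappa(\g)$-module is exactly as hard as the original problem. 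The Arakawa--Frenkel duality enters through the \emph{twisted} reductions $H_{\mathrm{DS},\lambda}$ and the decomposition of the equivariant $\W$-algebra — that is the content of the proof of Theorem \ref{thm: kernel object}, and it produces the decomposition of $A[\g,\kappa,1]$, not of $V^{\kappa-1}(\g)\otimes L_1(\g)$. A secondary issue: even granting both decompositions, ``uniqueness of the conformal extension'' of $V^\kappa(\g)\otimes\W^{\kappa^*}(\g)$ with a prescribed multiplicity-free branching is not automatic; the summands are not simple currents, and one would need to argue that the vertex algebra structure on the given module is unique up to isomorphism.

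The paper's proof avoids this entirely and runs in the opposite direction: it works internally in $A[\g,\kappa,1]$. A Kac--Wakimoto identity shows $\mathrm{ch}[A[\g,\kappa,1]]=\mathrm{ch}[V^{\kappa-1}(\g)]\,\mathrm{ch}[L_1(\g)]$, which pins down the weight-one subspace as $\g\oplus\mathrm{ad}(\g)$; Lemmas \ref{ALemma}--\ref{BLemma} show the induced Lie algebra is either $\g\oplus\g$ or the Takiff algebra, the Takiff case is excluded by a character bound (Corollary \ref{ACor}), and a singular-vector computation at conformal weight two forces one of the two levels to be $1$. This yields a map $V^{\kappa-1}(\g)\otimes L_1(\g)\to A[\g,\kappa,1]$, which is an isomorphism by simplicity of the source and equality of characters. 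If you want to complete a proof along your lines, you would have to either cite \cite{ACL} outright (defeating the purpose of the paper) or supply an independent argument for $M_\lambda\simeq T^{\kappa^*}_{\lambda^*,0}$, plus a genuine uniqueness-of-extension argument.
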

\noindent
See Section \ref{sec: Proof of Theorem 2.3} for the proof. The proof says that we have a map $V^{\kappa -1}_S(\g) \otimes_\C L_1(\g) \rightarrow A[\g, \kappa, 1]_S$ as deformable families over an \'{e}tale cover $\mathrm{Spec}S$ of a Zariski open subset of $\C$. 
The isomorphism $\text{Com}(L_\kappa(\g), L_{\kappa -1}(\g) \otimes L_1(\g)) \simeq \W_{\kappa^*}(\g)$ when the level $\kappa-1$ is admissible follows from the same argument as in the proof of Corollary \ref{cor}.

A variant of the method for $\g$ of type $C$ shows that $V^\kappa(\mathfrak{osp}_{1|2n})$ is an extension of $V^\kappa(\mathfrak{sp}_{2n})$ and $\W^{\kappa^*}(\mathfrak{sp}_{2n})$ with  $ \frac{1}{\kappa+h^\vee} + \frac{1}{\kappa^*+h^\vee} = 2$, which is key for understanding ordinary modules of $L_\kappa(\mathfrak{osp}_{1|2n})$ in \cite{CGL}.

\vspace{2mm}
\noindent{\bf Acknowledgements} The work of T.C. is supported by NSERC Grant Number RES0048511 and the work of S.N is supported by JSPS Overseas Research
Fellowships Grant Number 202260077.

\section{Integral form of the equivariant $\W$-algebra}\label{sec: integral form}
We follow the notation in Section \ref{sec: motivation}. In particular, we denote by $\g$ a simple Lie algebra and by $Q\subset P$ the root and weight lattice, respectively. Switching to the Langlands dual Lie algebra $\check{\g}$ of $\g$, we use the symbol $\check{X}$ for $\check{\g}$ corresponding to $X$ for $\g$, e.g. $\check{Q}$ stands for the root lattice of $\check{\g}$.
\subsection{Deformable family of vertex algebras}\label{Deformable family}
Let $\mathbb{A}=\C[\mathbf{k}]$ denote the polynomial ring in the variable $\K$ and $\F=\C(\K)$ the field of rational functions. 
Given a subset $U\subset \C$, consider a family of vertex algebras (or its modules) $V^\kappa$ depending on the parameter $\kappa\in \C\backslash U$.
We say that it is a deformable family of vertex algebras \cite{CL0} if there exists a vertex algebra $V_{\A_U}$ over $\A_U:=\A[\frac{1}{\K-a}\mid a\in U]$ which is $\A_U$-free and satisfies $V_{\A_U}\otimes_{\A_U}\C_\kappa\simeq V^\kappa$ ($\kappa\in \C\backslash U$).
We often write $V^\kappa_{\A_U}$ to keep the parameter $\kappa$ explicit.
It is useful to view $V_\F^\kappa:=V_{\A_U}^\kappa\otimes_{\A_U}\F$ as the vertex algebra capturing the behavior of $V^\kappa$ for generic $\kappa$. When $U$ is empty, i.e. $\A_U=\A$, we call $V^\kappa_\A$ the integral form of $V^\kappa$. 
Note that the affine vertex algebra $V^\kappa(\g)$ ($\kappa\in \C$) has an integral form $V_\A^\kappa(\g)=U_{\A}(\widehat{\g}_{\A})\otimes_{U(\g[\![t]\!]}\C$ where $\widehat{\g}_\A$ is the affine Lie algebra over $\A$ whose level $\kappa$ is replaced by $\K$ and $U_{\A}(\widehat{\g}_{\A})$ is its enveloping algebra over $\A$. Similarly, we have the integral forms of Weyl modules $\mathbb{V}_{\lambda,\A}^\kappa:=U_{\A}(\widehat{\g}_{\A})\otimes_{U(\g[\![t]\!])}L_\lambda$.

\subsection{Principal $\W$-algebra}\label{sec:Walgebras}
Let us recall the BRST reduction functor $H_{\mathrm{DS}}$. 
For this, let $\mathfrak{h}\subset \g$ denote the Cartan subalgebra, $\g= \mathfrak{n}_+\oplus \mathfrak{h}\oplus \mathfrak{n}_-$ the triangular decomposition,  and $\mathfrak{n}_+=\oplus_{\alpha\in \Delta_+}\g_\alpha$ the decomposition into root subspaces with $\Delta_+$ the set of positive roots. We denote by $\Pi\subset \Delta_+$ the set of simple roots.  We fix (non-zero) root vectors $e_\alpha\in \g_\alpha$ with structure constants $c_{\alpha,\beta}^\gamma$. 
Then given a $V^\kappa(\g)$-module $M$, the BRST reduction $H_{\mathrm{DS}}(M)$ with coefficients in $M$ is, by definition, the cohomology of the complex 
$$C(M)=M\otimes \bigwedge{}^{\frac{\infty}{2}+\bullet}(\mathfrak{n}_+)$$
equipped with the differential $d=d_{\mathrm{st}}+d_{\chi}$ given by 
\begin{align*}
&d_{\mathrm{st}}=\int dz \sum_{\alpha\in \Delta_+}e_{\alpha}(z)\otimes \psi_{\alpha_i}^*(z)-\frac{1}{2}\sum_{\alpha,\beta,\gamma\in \Delta_+}c_{\alpha,\beta}^\gamma:\psi_\gamma(z)\psi_\alpha^*(z)\psi_\beta^*(z):, \\
&d_{\chi}=\int dz \sum_{\alpha\in \Pi}\psi^*_\alpha(z)=\sum_{\alpha\in \Pi}\psi^*_{\alpha,1}.
\end{align*}
Here $\bigwedge{}^{\frac{\infty}{2}+\bullet}(\mathfrak{n}_+)$ is a tensor product of $bc$-systems generated by the fields $\psi_{\alpha}(z)$, $\psi_{\alpha}^*(z)$, ($\alpha\in \Delta_+$), satisfying the OPE $\psi_{\alpha}(z)\psi_{\beta}^*(w)\sim \delta_{\alpha,\beta}/(z-w)$.
In particular, the vertex algebra $\W^\kappa(\g)=H_{\mathrm{DS}}^0(V^\kappa(\g))$ is called the principal $\W$-algebra and enjoys the Feigin--Frenkel duality \cite{FF1} 
\begin{align}\label{FF duality}
\W^\kappa(\g)\simeq \W^{\check{\kappa}}(\check{\g}),\quad r^\vee(\kappa+h^\vee)(\check{\kappa}+\check{h}^\vee)=1.
\end{align}
Note that the functor $H_{\mathrm{DS}}$ is well-defined over $\A$. 
By \cite[Appendix B]{BFN}, $\W_\A^\kappa(\g):=H_{\mathrm{DS}}^0(V_\A(\g))$ gives the integral form of $\W^\kappa(\g)$ and \eqref{FF duality} is refined to 
$\W_\F^\kappa(\g) \simeq \W_\F^{\check{\kappa}}(\check{\g})$ with $r^\vee(\K+h^\vee)(\check{\K}+\check{h}^\vee)=1$, see \cite{ACL, BFN} for more on the duality for integral forms.
We also have the integral forms of $H_{\mathrm{DS}}(\mathbb{V}_{\lambda}^\kappa)$:
\begin{proposition}\label{cohomology vanishing}\hspace{0mm}\\
(i) $H_{\mathrm{DS}}^n(\mathbb{V}_{\lambda,\A})=0$ if $n\neq 0$.\\
(ii) $H_{\mathrm{DS}}^0(\mathbb{V}_{\lambda,\A})$ is $\A$-free and satisfies $H_{\mathrm{DS}}^0(\mathbb{V}_{\lambda,\A})\otimes_\A \C_\kappa \simeq H_{\mathrm{DS}}^0(\mathbb{V}_\lambda^\kappa)$ for $\kappa\in \C$.
\end{proposition}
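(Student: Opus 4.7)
The plan is to mimic \cite[Appendix B]{BFN}, which settles the vacuum-module case, and to adapt its argument to the Weyl modules $\mathbb{V}_{\lambda,\A}$. The first observation I would make is that the complex $C(\mathbb{V}_{\lambda,\A}) = \mathbb{V}_{\lambda,\A}\otimes\bigwedge^{\frac{\infty}{2}+\bullet}(\mathfrak{n}_+)$ inherits a conformal-weight grading from its factors and that the differential $d = d_{\mathrm{st}} + d_\chi$ preserves it. Because only finitely many semi-infinite monomials contribute at each fixed conformal weight $\Delta$, the homogeneous piece $C_\Delta := C(\mathbb{V}_{\lambda,\A})_\Delta$ is a bounded complex of finite rank free $\A$-modules. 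This reduces both (i) and (ii) to a weight-by-weight statement about finite complexes of free modules over the PID $\A=\C[\K]$.

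The decisive input I would then feed in is the all-level version of Arakawa's vanishing theorem for principal Drinfeld--Sokolov reduction of Weyl modules: $H_{\mathrm{DS}}^n(\mathbb{V}_\lambda^\kappa)=0$ for every $n\neq 0$ and every $\kappa\in\C$. Granted this, the universal coefficient exact sequence
\[
0\to H^n(C_\Delta)\otimes_\A\C_\kappa\to H^n(C_\Delta\otimes_\A\C_\kappa)\to \mathrm{Tor}_1^\A(H^{n+1}(C_\Delta),\C_\kappa)\to 0,
\]
valid for any bounded complex of free modules over a PID, becomes the workhorse. For $n\neq 0$ the middle term vanishes at every closed point of $\mathrm{Spec}\,\A$, so $H^n(C_\Delta)\otimes_\A\C_\kappa=0$ at every $\kappa$; Nakayama together with finite generation over the PID then forces $H^n(C_\Delta)=0$, which is (i). Feeding $n=-1$ into the same sequence yields $\mathrm{Tor}_1^\A(H^0(C_\Delta),\C_\kappa)=0$ at every closed point, so over the PID the finitely generated module $H^0(C_\Delta)$ is torsion-free and hence $\A$-free. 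The specialization isomorphism in (ii) then drops out of the $n=0$ instance using $H^1(C_\Delta)=0$ from (i), and summing over $\Delta$ assembles the claimed integral form.

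The hard part is the input rather than the bookkeeping: one must genuinely invoke the all-level, not merely generic, vanishing for principal DS cohomology of Weyl modules, after which the rest is a routine universal-coefficient/Nakayama argument. A more self-contained alternative would be to run a charge-filtration spectral sequence directly on $C(\mathbb{V}_{\lambda,\A})$ whose $E_1$-page is manifestly $\A$-free and concentrated in cohomological degree zero, paralleling the BFN template for the vacuum; but that would require redoing a Kostant-type computation that I would rather outsource to the known vanishing result.
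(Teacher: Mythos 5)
Your reduction rests on the opening claim that each conformal-weight component $C_\Delta$ of $C(\mathbb{V}_{\lambda,\A})$ is a bounded complex of \emph{finite-rank} free $\A$-modules, and this is where the argument breaks. The grading preserved by $d=d_{\mathrm{st}}+d_\chi$ is not the naive conformal grading but the $\rho^\vee$-shifted one (otherwise $d_\chi=\sum_{\alpha\in\Pi}\psi^*_{\alpha,1}$ has degree $-1$), and for the principal ``$+$''-reduction used here the shifted graded pieces are infinite-dimensional: already for $\g=\mathfrak{sl}_2$ the vectors $e_{(-1)}^n|0\rangle$, $n\geq 0$, all sit in shifted conformal weight $0$ and charge $0$. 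Consequently $H^n(C_\Delta)$ is not a priori finitely generated over $\A=\C[\K]$, and both of your concluding steps genuinely need finite generation: a $\C[\K]$-module all of whose fibres vanish need not be zero (e.g.\ $\C(\K)/\C[\K]$), and a torsion-free $\C[\K]$-module need not be free (e.g.\ $\C(\K)$). The universal-coefficient sequence itself survives for arbitrary complexes of free modules over a PID (cycles and boundaries are free), but the Nakayama step and the ``torsion-free, hence free'' step do not. Restoring the missing finiteness is precisely the convergence issue that makes the $+$-reduction delicate and forces the tensor-decomposition/spectral-sequence arguments of \cite[Chapter 15]{FBZ} and \cite[Appendix B]{BFN}; it cannot be outsourced to the pointwise vanishing theorem, which says nothing about the $\A$-module structure of the cohomology of the integral complex.

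This is also where you diverge from the paper: its proof is exactly your ``more self-contained alternative,'' namely a word-by-word translation to $\A$ and to general $\lambda\in P^+$ of the argument of \cite[Appendix B]{BFN} (itself the $\A$-linear upgrade of \cite[Chapter 15]{FBZ}), following \cite[Section 4.2]{AF}. If you want to keep the universal-coefficient flavour, you must first extract from that argument --- not from the fibrewise vanishing --- a statement guaranteeing finite generation (for instance, a decomposition of the complex into finite free subcomplexes compatible with $d$), at which point you have essentially reproduced the cited proof.
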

\proof
The case $\lambda=0$, i.e.\ $V_{\lambda,\A}=V_\A(\g)$ is proven in \cite[Appendix B]{BFN} by upgrading the proof in \cite[Chapter 15]{FBZ} for the case over $\C$ to the case over $\A$. As the proof in \cite[Chapter 15]{FBZ} is straightforwardly generalized to the case $\lambda \in P^+$ \cite[Section 4.2]{AF}, the assertion is proven by word-by-word translation of the argument in \cite[Appendix B]{BFN} to the case $\lambda \in P^+$, following \cite[Section 4.2]{AF}.
\endproof
To introduce more $\W^\kappa(\g)$-modules, we use the twisted BRST reduction $H_{\mathrm{DS},\check{\mu}}$ ($\check{\mu}\in \check{P}^+$). Given a $V^\kappa(\g)$-module $M$, we introduce a $C(V^\kappa(\g))$-module $\sigma_{\check{\mu}}^*(C(M))$ on the vector space $C(M)$ by using the Li's $\Delta$-operator $\Delta(-\check{\mu}_\Delta,z)$ with
$$\check{\mu}_\Delta(z)=\check{\mu}(z)+\sum_{\alpha\in \Delta_+}(\check{\mu},\alpha):\psi_\alpha(z)\psi^*_\alpha(z):.$$
More explicity, we set $\Delta(-\check{\mu}_\Delta,z)=z^{-\check{\mu}_{\Delta,0}}\mathrm{exp}(\sum_{n=1}^\infty\frac{\check{\mu}_{\Delta,n}}{n}(-z)^{-n})$ and 
\begin{align}\label{twisted action}
C(V^\kappa(\g))\rightarrow \mathrm{End}(C(M))[\![z^{\pm1}]\!],\quad A\mapsto Y_{C(M)}\left(\Delta(-\check{\mu}_\Delta,z)A,z\right).
\end{align}
The twisted BRST reduction $H_{\mathrm{DS},\check{\mu}}$ is defined as the cohomology $H_{\mathrm{DS},\check{\mu}}(M)=H_{\mathrm{DS}}(\sigma_{\check{\mu}}^*(M))$, which is naturally a module over $\W^\kappa(\g)$ by \eqref{twisted action}.
Equivalently, we modify the differential $d$ as $d_{\check{\mu}}=d_{\mathrm{st}}+d_{\chi,\check{\mu}}$ with
$$d_{\chi,\check{\mu}}=\sum_{\alpha\in \Pi}\psi^*_{\alpha,(\check{\mu},\alpha)+1}.$$

\begin{proposition}[\cite{AF}]\label{twistedDS} Let $\lambda\in P^+$, $\check{\mu}\in\check{P}^+$ and set  $T^{\kappa}_{\lambda,\check{\mu}}=H^0_{\mathrm{DS},\check{\mu}}(\mathbb{V}^\kappa_\lambda)$.\\
\textup{(i)} For $\kappa\in \C$, $H_{\mathrm{DS},\check{\mu}}^n(\mathbb{V}^\kappa_\lambda)=0$ if $n\neq0$.\\
\textup{(ii)} For $\kappa\in \C\backslash \Q$, there is an isomorphism 
$$T^{\kappa}_{\lambda,\check{\mu}} \simeq \check{T}^{\check{\kappa}}_{\check{\mu},\lambda}$$
of modules over $\W^\kappa(\g)\simeq \W^{\check{\kappa}}(\check{\g})$ \eqref{FF duality}.
\end{proposition}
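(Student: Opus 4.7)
The plan is to follow the strategy of Arakawa--Frenkel \cite{AF}, deducing (i) from a mild strengthening of Proposition \ref{cohomology vanishing} and (ii) from the Wakimoto free field realization together with the Feigin--Frenkel level exchange \eqref{FF duality}.

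For (i), the key observation is that the twist enters the differential only through the mode shift in the finite part $d_{\chi,\check\mu}$, while the semi-infinite Koszul piece $d_{\mathrm{st}}$ is unchanged. Following the argument of \cite[Chapter 15]{FBZ} and its $\A$-version in \cite[Appendix B]{BFN}, place a decreasing filtration on $C(\mathbb{V}^\kappa_\lambda)$ by the eigenvalue of $L_0$ shifted by $\check\mu_{\Delta,0}$, so that on the associated graded $d_{\chi,\check\mu}$ is pushed to strictly positive filtration degree and $d_{\mathrm{st}}$ reduces to a Chevalley--Eilenberg-type differential on $\mathbb{V}^\kappa_\lambda\otimes\bigwedge^\bullet\mathfrak{n}_+^*$. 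The $E_1$ page then computes the classical Lie-algebra cohomology $H^\bullet(\mathfrak{n}_+,L_\lambda)$ twisted by a character built from $\check\mu$, which by Kostant's theorem is concentrated in degree zero. Consequently the spectral sequence degenerates at $E_1$ and one obtains $H^n_{\mathrm{DS},\check\mu}(\mathbb{V}^\kappa_\lambda)=0$ for $n\neq 0$.

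For (ii), use the Wakimoto embedding $V^\kappa(\g)\hookrightarrow\mathcal{M}_\g\otimes\pi^\kappa$, where $\mathcal{M}_\g$ is a $\beta\gamma$-system on $\mathfrak{n}_+$ and $\pi^\kappa$ is a Heisenberg vertex algebra whose bilinear form is rescaled by $\kappa+h^\vee$. For $\kappa\in\C\setminus\Q$ this embedding is faithful, and $\mathbb{V}^\kappa_\lambda$ corresponds to the Wakimoto module of Heisenberg Fock weight $\lambda$. The twist $\Delta(-\check\mu_\Delta,z)$ shifts this Heisenberg Fock weight by a multiple of $\check\mu$, the remaining part acting as a spectral flow on the $\beta\gamma$-factor. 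Applying $H^0_{\mathrm{DS}}$ kills the $\beta\gamma$-piece by the standard spectral sequence, leaving a Fock module for the Heisenberg subalgebra appearing in the Feigin--Frenkel free field realization of $\W^\kappa(\g)$. The level exchange $r^\vee(\kappa+h^\vee)(\check\kappa+\check h^\vee)=1$ identifies this Heisenberg, after rescaling of the form, with the analogous one for $\W^{\check\kappa}(\check\g)$, and the pair of labels $(\lambda,\check\mu)$ indexing the Fock weight is swapped to $(\check\mu,\lambda)$ on the Langlands dual side, matching the definition of $\check T^{\check\kappa}_{\check\mu,\lambda}$.

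The main technical obstacle is the bookkeeping of Fock weights: one must simultaneously track how Li's operator $\Delta(-\check\mu_\Delta,z)$ decomposes along the $\mathcal{M}_\g\otimes\pi^\kappa$ factorization and how the Heisenberg form rescales under the Feigin--Frenkel map. The irrationality assumption $\kappa\in\C\setminus\Q$ is essential here: it guarantees that the Wakimoto realization is an embedding, that the relevant Fock modules are irreducible $\pi^\kappa$-modules, and that the character-level matching promotes to an isomorphism of $\W^\kappa(\g)$-modules via the rigidity of intertwiners between simple Fock modules.
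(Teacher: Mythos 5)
First, note that the paper does not actually prove Proposition \ref{twistedDS}: it is imported wholesale from \cite{AF} (with the remark that it persists over $\F$), so your proposal is being measured against the Arakawa--Frenkel argument rather than anything in this text.

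Part (i) of your proposal has a genuine gap. You filter so that $d_{\chi,\check\mu}$ sits in strictly positive filtration degree and claim the $E_1$ page is $H^\bullet(\mathfrak{n}_+,L_\lambda)$ (twisted by a character that, on your own filtration, has been pushed off the leading term), and that Kostant's theorem concentrates this in degree zero. Kostant's theorem says the opposite: $H^n(\mathfrak{n}_+,L_\lambda)\simeq\bigoplus_{\ell(w)=n}\C_{w(\lambda+\rho)-\rho}$ is nonzero in every degree $0\le n\le|\Delta_+|$, and likewise the semi-infinite cohomology of $L\mathfrak{n}_+$ with coefficients in a Weyl module without the character is spread over many degrees. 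The cohomological purity of $H_{\mathrm{DS},\check\mu}$ is driven precisely by the non-degenerate character part of the differential (all simple-root modes $\psi^*_{\alpha,(\check\mu,\alpha)+1}$ appear), so any filtration that relegates $d_{\chi,\check\mu}$ to higher order cannot degenerate at $E_1$; the higher differentials are exactly what kill the Kostant classes. The correct route --- the one followed in \cite[Chapter 15]{FBZ}, upgraded to $\A$ in \cite[Appendix B]{BFN} and to twisted reductions of Weyl modules in \cite[Section 4.2]{AF}, which is also how Proposition \ref{cohomology vanishing} above is proved --- is to decompose the complex $C(\mathbb{V}^\kappa_\lambda)$ into a tensor product of subcomplexes in which the character produces a Koszul-type piece with cohomology $\C$ in degree zero, the freeness of the Weyl module over the relevant negative modes supplying the other factor.

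Part (ii) is essentially the right idea and close to what \cite{AF} actually does: realize $T^\kappa_{\lambda,\check\mu}$ for irrational $\kappa$ inside a Fock module for the Heisenberg algebra of the Miura/Feigin--Frenkel free-field realization, observe that the twist $\Delta(-\check\mu_\Delta,z)$ shifts the Fock weight by $-(\kappa+h^\vee)\check\mu$ so that the highest weight is $\lambda+\rho-(\kappa+h^\vee)(\check\mu+\check\rho)$ up to normalization, and check that the rescaling implementing $r^\vee(\kappa+h^\vee)(\check\kappa+\check h^\vee)=1$ interchanges the roles of $\lambda$ and $\check\mu$. Two points need to be said more carefully than ``rigidity of intertwiners'': irreducibility must be asserted for the Fock/Verma module as a $\W^\kappa(\g)$-module (not merely as a Heisenberg module), which is where irrationality of $\kappa$ enters via the non-integrality of the highest weight; and the conclusion is then the standard fact that two simple highest-weight $\W^\kappa(\g)$-modules with matching highest weights under \eqref{FF duality} are isomorphic. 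With part (i) repaired along the lines above, the sketch of (ii) is acceptable.
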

\noindent
We note that the proposition also holds over $\F$.

\subsection{Equivariant $\W$-algebra}
We construct the integral form of the equivariant vertex algebra $\mathcal{D}_{G,\A}^W$ over $\A$ for $\mathcal{D}_{G,\kappa}^W$ by using the following integral form of the algebra of chiral differential operators 
$$\mathcal{D}_{G,\A}^{\mathrm{ch}}=U_{\A}(\widehat{\g}_{\A})\otimes_{U(\g[\![t]\!])}\C[J_\infty G].$$
\begin{theorem}\label{integral form of equiv W}\hspace{0mm}\\
(i) $H_{\mathrm{DS}}^n(\mathcal{D}_{G,\A}^{\mathrm{ch}})=0$ if $n\neq 0$.\\
(ii) $H_{\mathrm{DS}}^0(\mathcal{D}_{G,\A}^{\mathrm{ch}})$ is $\A$-free and satisfies 
$H_{\mathrm{DS}}^0(\mathcal{D}_{G,\A}^{\mathrm{ch}})\otimes_{\A}\C_\kappa \simeq \mathcal{D}_{G,\kappa}^W$.
\end{theorem}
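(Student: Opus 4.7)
My strategy is to reduce Theorem \ref{integral form of equiv W} to Proposition \ref{cohomology vanishing} by exhibiting a $\g[\![t]\!]$-stable filtration of $\C[J_\infty G]$ whose associated graded consists of finite-dimensional $\g$-modules (with $\g[\![t]\!]$ acting through the quotient $\g[\![t]\!]\twoheadrightarrow \g$). Inducing along $U(\g[\![t]\!])\hookrightarrow U_\A(\widehat{\g}_\A)$ then produces a filtration of $\mathcal{D}_{G,\A}^{\mathrm{ch}}$ with subquotients of the form $\mathbb{V}_{\lambda,\A}$ tensored with finite-dimensional multiplicity spaces, to which Proposition \ref{cohomology vanishing} applies directly.

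To build the filtration I combine the right $G$-action on $J_\infty G$ (coming from $G \hookrightarrow J_\infty G$ as constant jets, hence commuting with the left $\g[\![t]\!]$-action) with the conformal weight grading. Write $\C[J_\infty G] \simeq \bigoplus_\mu M_\mu \otimes L_\mu$ for the right $G$-isotypic decomposition, where $M_\mu$ is a left $\g[\![t]\!]$-module whose conformal-weight pieces $(M_\mu)_n$ are finite-dimensional. The truncation $F_n M_\mu := \bigoplus_{m \leq n}(M_\mu)_m$ is $\g[\![t]\!]$-stable since $X \otimes t^k$ with $k \geq 1$ lowers conformal weight, so $F_n M_\mu/F_{n-1} M_\mu \simeq (M_\mu)_n$ retains only the constant-mode $\g$-action. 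Decomposing further into left $\g$-isotypic components $(M_\mu)_n \simeq \bigoplus_\lambda L_\lambda \otimes V_{\mu,n,\lambda}$ yields a $\g[\![t]\!]$-stable filtration of $\C[J_\infty G]$ whose successive quotients are copies of $L_\lambda$ up to trivial finite-dimensional multiplicity spaces.

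Since $U_\A(\widehat{\g}_\A)$ is right $U(\g[\![t]\!])$-free by PBW, induction is exact over $\A$ and transports this to a filtration of $\mathcal{D}_{G,\A}^{\mathrm{ch}}$ with subquotients $\mathbb{V}_{\lambda,\A} \otimes V_{\mu,n,\lambda} \otimes L_\mu$. Proposition \ref{cohomology vanishing} gives $H_{\mathrm{DS}}^n = 0$ for $n \neq 0$ and $\A$-freeness of $H_{\mathrm{DS}}^0$ on every subquotient. Since $\A = \C[\K]$ is a PID and extensions of $\A$-free modules stay $\A$-free, a transfinite induction along the filtration---applied separately to each right-$G$-isotypic summand $\mathcal{D}_{G,\A,\mu}^{\mathrm{ch}}$, which is of finite $\A$-rank at every conformal weight---yields (i) and the $\A$-freeness claim in (ii). The comparison $H_{\mathrm{DS}}^0(\mathcal{D}_{G,\A}^{\mathrm{ch}}) \otimes_\A \C_\kappa \simeq \mathcal{D}_{G,\kappa}^W$ is then a consequence of the universal coefficient formula for the $\A$-free complex $C(\mathcal{D}_{G,\A}^{\mathrm{ch}}) = \mathcal{D}_{G,\A}^{\mathrm{ch}} \otimes \semi(\mathfrak{n}_+)$, whose cohomology is concentrated in degree $0$.

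The main obstacle I anticipate is confirming that passage to the direct limit of the filtration commutes with the formation of DS cohomology, i.e.\ that taking cohomology does not introduce spurious cycles at infinity. Working right-$G$-isotypic-component-by-component reduces this to a genuinely finite computation at each conformal weight, after which the final assembly of $\A$-freeness and vanishing from the pieces is automatic.
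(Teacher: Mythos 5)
Your proposal is correct and follows essentially the same route as the paper: both filter by conformal weight, use the commuting $G$-action (the paper's $\g^R$) to get finite multiplicities, identify the subquotients of the induced filtration as Weyl modules $\mathbb{V}_{\lambda,\A}$, and then combine Proposition \ref{cohomology vanishing} with a long-exact-sequence induction and passage to the direct limit. The only cosmetic difference is that you filter $\C[J_\infty G]$ and then induce, whereas the paper works directly with the $V_\A(\g)$-submodules $\widehat{M}_\A[\leq \Delta]$ generated by the graded pieces; these yield the same filtration.
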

\noindent
Therefore, $H_{\mathrm{DS}}^0(\mathcal{D}_{G,\A}^{\mathrm{ch}})$ is the integral form $\mathcal{D}_{G,\A}^W$ of $\mathcal{D}_{G,\kappa}^W$.

The vertex algebra $\mathcal{D}_{G,\A}^{\mathrm{ch}}$ has a conformal vector $\omega(z)$, \cite{GMS1}. After the base change $\mathcal{D}_{G,\A}^{\mathrm{ch}}\otimes_{\A}\A[\frac{1}{\K+h^\vee}]$, it is the sum of the Sugawara vectors of affine vertex subalgebras $V^\kappa_{\A}(\g^L)$ and $V^{-\kappa-2h^\vee}_{\A}(\g^R)$ over $\A$ corresponding to the left and right  invariant vector fields
Since the PBW base theorem implies 
$$\mathcal{D}_{G,\A}^{\mathrm{ch}}\simeq \A \otimes_\C U(\g[t^{-1}]t^{-1})\otimes_\C \C[J_\infty G]$$
as $\A$-modules, each homogeneous subspace of the conformal weight decomposition 
$$\mathcal{D}_{G,\A}^{\mathrm{ch}}=\bigoplus_{\Delta\geq0}\mathcal{D}_{G,\A}^{\mathrm{ch}}[\Delta]$$
is $\A$-free and seen as the base change $\A\otimes_\C M[\Delta]$ of some semisimple $(\g^L,\g^R)$-bimodule $M[\Delta]\subset U(\g[t^{-1}]t^{-1})\otimes_\C \C[J_\infty G]$ over $\C$. In particular, we have
$$M[0]=\C[G] \simeq \bigoplus_{\lambda\in P^+} L_\lambda\otimes_\C L_{\lambda^*}.$$
It follows that $M[\Delta]$ ($\Delta\geq0$) decoompses into
\begin{align}\label{decomposition of each conformal weight}
M[\Delta]\simeq \bigoplus_{\lambda\in P^+} M^\lambda[\Delta]\otimes L_{\lambda^*}
\end{align}
as a $(\g^L,\g^R)$-bimodule where $M^\lambda[\Delta]$ is a finite dimensional semisimple $\g^L$-module consisting of the highest weight vectors of weight $\lambda$ for the $\g^R$-action. 
Now, let $\widehat{M}_\A[\leq \Delta]$ denote the $(V_\A^L(\g),\g^R)$-sub-bimodule of $\mathcal{D}_{G,\A}^{\mathrm{ch}}$ generated by $M[p]$ with $p\leq\Delta$.
In partucular, we have 
\begin{align}\label{starting point}
\widehat{M}_\A[\leq 0]\simeq \bigoplus_{\lambda\in P^+}\mathbb{V}_{\lambda,\A}\otimes_\C L_{\lambda^*}\subset \mathcal{D}_{G,\A}^{\mathrm{ch}}.
\end{align}
Here $\mathbb{V}_{\lambda,\A}=U_\A(\widehat{\g}_\A)\otimes_{U(\g[\![t]\!])}L_\lambda $ is the integral form of the Weyl module of highest weight $\lambda$.
By using the decomposition \eqref{decomposition of each conformal weight}, we find that $\widehat{M}_\A[\leq \Delta]$ decomposes into
$$\widehat{M}_\A[\leq \Delta]\simeq \bigoplus_{\lambda} \widehat{M}^\lambda_\A[\leq \Delta]\otimes_\C L_{\lambda^*} $$ 
where $\widehat{M}^\lambda_\A[\leq \Delta]$ is a finite successive extension of $\mathbb{V}_{\lambda,\A}$ by Weyl modules $\mathbb{V}_{\mu,\A}$.

In order to apply Proposition \ref{cohomology vanishing} to $\mathcal{D}_{G,\A}^{\mathrm{ch}}$, we use an inductive argument which is verified by the following easy lemma:
\begin{lemma}\label{induction}
Let $0\rightarrow M_1 \rightarrow M_2 \rightarrow M_3\rightarrow 0$ be an exact sequence of $V_{\A}(\g)$-modules such that the following property holds for $i=1,3$:\\
\textup{(P1)} $H_{\mathrm{DS}}^n (M_i)=0$ ($n\neq 0$) and $H_{\mathrm{DS}}^0 (M_i)$ is $\A$-free.\\
\textup{(P2)} $H_{\mathrm{DS}}^0 (M_i)\otimes_\A \C_\kappa\simeq H_{\mathrm{DS}}^0 (M_i\otimes_\A \C_\kappa)$.\\
Then \textup{(P1)-(P2)} also hold for $i=2$.
\end{lemma}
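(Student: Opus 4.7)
The plan is to apply the BRST complex functor $C(-) = - \otimes_\C \bigwedge^{\frac{\infty}{2}+\bullet}(\mathfrak{n}_+)$ to the given short exact sequence and pass to the associated long exact sequence in cohomology. Since the bc-system is a $\C$-vector space and hence flat, one obtains a short exact sequence of complexes $0 \to C(M_1) \to C(M_2) \to C(M_3) \to 0$. The resulting long exact sequence in BRST cohomology, combined with (P1) for $M_1$ and $M_3$, immediately forces $H^n_{\mathrm{DS}}(M_2)=0$ for $n\neq 0$ and yields a short exact sequence of $\A$-modules
\begin{equation*}
0 \to H^0_{\mathrm{DS}}(M_1) \to H^0_{\mathrm{DS}}(M_2) \to H^0_{\mathrm{DS}}(M_3) \to 0.
\end{equation*}
Since $H^0_{\mathrm{DS}}(M_3)$ is $\A$-free, hence projective, this sequence splits as $\A$-modules, so $H^0_{\mathrm{DS}}(M_2) \simeq H^0_{\mathrm{DS}}(M_1) \oplus H^0_{\mathrm{DS}}(M_3)$ is $\A$-free. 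This establishes (P1) for $M_2$.

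For (P2), I would compare two short exact sequences of $\C_\kappa$-modules obtained by base change. On the one hand, tensoring the displayed sequence above with $\C_\kappa$ over $\A$ preserves exactness because $H^0_{\mathrm{DS}}(M_3)$ is $\A$-free. On the other hand, running the same long-exact-sequence argument for the base-changed sequence $0 \to M_1\otimes_\A\C_\kappa \to M_2\otimes_\A\C_\kappa \to M_3\otimes_\A\C_\kappa \to 0$ yields an analogous short exact sequence with entries $H^0_{\mathrm{DS}}(M_i\otimes_\A\C_\kappa)$; the vanishing $H^n_{\mathrm{DS}}(M_i\otimes_\A\C_\kappa)=0$ for $n\neq 0$, $i=1,3$, that is required here follows from (P1) via the universal coefficient theorem over the PID $\A$, the Tor term vanishing by the $\A$-freeness of $H^\bullet_{\mathrm{DS}}(M_i)$. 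The natural comparison maps $H^0_{\mathrm{DS}}(M_i)\otimes_\A\C_\kappa \to H^0_{\mathrm{DS}}(M_i\otimes_\A\C_\kappa)$ are isomorphisms for $i=1,3$ by (P2), so the five-lemma delivers the same isomorphism at $i=2$, which is (P2) for $M_2$.

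The main obstacle is the $\A$-flatness bookkeeping: both the exactness of the base-changed sequence of modules and the identification of $H^\bullet_{\mathrm{DS}}(M_i\otimes_\A\C_\kappa)$ with $H^\bullet_{\mathrm{DS}}(M_i)\otimes_\A\C_\kappa$ require each $M_i$, or equivalently each term $C(M_i)^n$, to be $\A$-flat. This is not part of the stated hypotheses but is automatic in the intended application, where the $M_i$ arise as $\A$-free subquotients of $\mathcal{D}_{G,\A}^{\mathrm{ch}}$ built inductively from the $\A$-free Weyl modules $\mathbb{V}_{\lambda,\A}$. Beyond this, the proof reduces to a routine diagram chase.
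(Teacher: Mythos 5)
Your argument is correct and matches the paper's proof in substance: (P1) via the long exact sequence in BRST cohomology, and (P2) via the universal-coefficient/K\"unneth spectral sequence over the PID $\A$, whose $\mathrm{Tor}$ term vanishes by the freeness established in (P1). The only cosmetic difference is that the paper applies the K\"unneth spectral sequence directly to $C(M_2)$ once (P1) is known for $M_2$ --- so the five-lemma and the hypotheses (P2) for $i=1,3$ are not actually needed at that step --- and you are right to flag that both versions silently use the $\A$-flatness of the $M_i$, which is automatic in the intended application to successive extensions of the $\A$-free Weyl modules.
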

\proof
(P1) is immediate from the long exact sequence for the functor $H_{\mathrm{DS}}$. Then the K$\ddot{\text{u}}$nneth spectral sequence implies that $H_{\mathrm{DS}}^0$ commutes with the base change and thus \textup{(P2)}.
\endproof

\begin{proof}[Proof of Theorem \ref{integral form of equiv W}]
It follows from \eqref{starting point} and Proposition \ref{cohomology vanishing} that 
\begin{itemize}
\item $H_{\mathrm{DS}}^n(\widehat{M}_\A[\leq p])=0$ if $n\neq 0$.
\item $H_{\mathrm{DS}}^0(\widehat{M}_\A[\leq p])$ is $\A$-free and $H_{\mathrm{DS}}^0(\widehat{M}_\A[\leq p])\otimes_\A \C_\kappa\simeq H_{\mathrm{DS}}^0(\widehat{M}_\A[\leq p]\otimes_\A \C_\kappa)$.
\end{itemize}
hold for $p=0$. 
Then the case for general $p$ follows by induction (Lemma \ref{induction}) since $\widehat{M}_\A[\leq p]$ is a finite successive extension of $\widehat{M}_\A[\leq 0]$ by Weyl modules. The assertion hold since the direct limit commutes with base change -$\otimes_\A \C_\kappa$ and taking cohomology
$H_{\mathrm{DS}}^n(\mathcal{D}_{G,\A}^W)\simeq \varinjlim H_{\mathrm{DS}}^n(\widehat{M}_\A[\leq p])$.
\end{proof}

\section{Proof of Theorem \ref{thm: kernel object}}\label{proof1}
We first work over the field $\F$. The argument will be completely the same as working over $\C$ for irrational levels, which will verify the assertion in Theorem \ref{thm: kernel object} for irrational levels.
 
Let $L_n(\check{\g})$ be the simple affine vertex algebra (over $\C$) with $n\in \Z_{\geq0}$. Then we have a decomposition 
\begin{align}\label{nGKO}
V^{\check{\kappa}}_\F(\check{\g})\otimes_\C L_n(\check{\g})\simeq \bigoplus_{\check{\mu}\in \check{Q}^+} \mathbb{V}^{\check{\kappa}+n}_{\check{\mu},\F}\otimes_\F C_{\check{\mu},\F}
\end{align}
as a module over the diagonal $V^{\check{\kappa}+n}_\F(\check{\g})$-action. Here $C_{\check{\mu},\F}$ is the multiplicity space consisting of highest weight vectors of weight $\check{\mu}$ at level $\check{\kappa}+n$, which is a simple module over the coset $\mathrm{Com}(V^{\check{\kappa}+n}_\F(\check{\g}),V^{\check{\kappa}}_\F(\check{\g})\otimes_\C L_n(\check{\g}))$ by  \cite[Theorem 4.12]{CL1}.
For $\lambda\in P$, we have by \cite[Theorem 7.1]{ACF} (which is stated over $\C$, but also holds over $\F$ ) an isomorphism of modules 
\begin{align*}
H^0_{\mathrm{DS},\lambda}(V^{\check{\kappa}}_\F(\check{\g})\otimes_\C L_n(\check{\g}))\simeq H^0_{\mathrm{DS},\lambda}(V^{\check{\kappa}}_\F(\check{\g}))\otimes_\C (\sigma_{\lambda}^*L_n(\check{\g}))
\end{align*}
over $H^0_{\mathrm{DS}}(V^{\check{\kappa}}_\F(\check{\g})\otimes_\C L_n(\check{\g}))\simeq H^0_{\mathrm{DS}}(V^{\check{\kappa}}_\F(\check{\g}))\otimes_\C L_n(\check{\g})$. Here $\sigma_{\lambda}^*$ is the twist defined similarly to \eqref{twisted action}.
Then by setting $\lambda\in Q^+$, \eqref{nGKO} implies 
\begin{equation}
\begin{split}
&H_{\mathrm{DS},\lambda}^0(V^{\check{\kappa}}_\F(\check{\g}))\otimes_\C L_n(\check{\g})
\simeq H_{\mathrm{DS},\lambda}^0(V^{\check{\kappa}}_\F(\check{\g})\otimes_\C L_n(\check{\g}))\\
&\hspace{1cm}\simeq\bigoplus_{\check{\mu}\in \check{Q}^+} H_{\mathrm{DS},\lambda}^0(\mathbb{V}^{\check{\kappa}+n}_{\check{\mu},\F})\otimes_\F C_{\check{\mu},\F}
\simeq \bigoplus_{\check{\mu}\in \check{Q}^+} \check{T}_{\check{\mu},\lambda,\F}^{\check{\kappa}+n}\otimes_\F C_{\check{\mu},\F}.
\end{split}
\end{equation}
Combining it with Proposition \ref{twistedDS} (ii), we find 
\begin{align}\label{rel of T}
T^\kappa_{\lambda,0,\F}\otimes_\C L_n(\check{\g})\simeq \bigoplus_{\check{\mu}\in \check{Q}^+}T^{\varkappa}_{\lambda,\check{\mu},\F}\otimes_\F C_{\check{\mu},\F}
\end{align}
where the levels $\kappa$, $\varkappa$ are defined by the relations 
\begin{align}\label{gluing rel}
r^\vee (\kappa+h^\vee) (\check{\kappa}+\check{h}^\vee)=1,\quad r^\vee (\check{\kappa}+n+\check{h}^\vee)(\varkappa+h^\vee)=1.
\end{align}

Next, we invoke the equvariant $\W$-algebra $\mathcal{D}_{\mathrm{Ad}(G),\A}^W$ associated with the algebraic group $\mathrm{Ad}(G)$ of Adjoint type. 
It is the fixed point of $\mathcal{D}_{G,\A}^W$ for the center $Z(G)=\mathrm{Ker}(G\twoheadrightarrow \mathrm{Ad}(G))$ and thus its base change $\mathcal{D}_{\mathrm{Ad}(G),\F}^W$ decomposes into 
\begin{align}\label{eqCDO}
\mathcal{D}_{\mathrm{Ad}(G),\F}^W\simeq \bigoplus_{\lambda\in Q^+} T^{\kappa}_{\lambda,0,\F}\otimes_\F \mathbb{V}^{\kappa^*}_{\lambda^*,\F},\quad \frac{1}{\kappa+h^\vee}+\frac{1}{\kappa^*+h^\vee}=0
\end{align}
as a module over $\W^{\kappa}_\F(\g)\otimes_\F V^{\kappa^*}_\F(\g)$. Then it follows from \eqref{rel of T} and \eqref{eqCDO} that 
\begin{align}\label{generic decomposition}
\mathcal{D}_{\mathrm{Ad}(G),\F}^W\otimes_\F L_n(\check{\g})
&\nonumber \simeq \bigoplus_{\lambda\in Q^+}  T_{\lambda,0,\F}^\kappa \otimes_\F \mathbb{V}^{\kappa^*}_{\lambda^*,\F} \otimes_\C L_n(\check{\g})\\
&\simeq \bigoplus_{\begin{subarray}c\lambda\in Q^+\\ \check{\mu}\in \check{Q}^+\end{subarray}} \mathbb{V}^{\kappa^*}_{\lambda^*,\F} \otimes_\F T^{\varkappa}_{\lambda,\check{\mu},\F}\otimes_\F C_{\check{\mu},\F}.
\end{align}
Now, introduce the following vertex algebra over $\A$
$$A[\g,\kappa^*,n]_{\A}:=\left(\mathcal{D}_{\mathrm{Ad}(G),\A}^W\otimes_\C L_n(\check{\g})\right) \cap \mathrm{Com}\left(C_{0,\F}, \mathcal{D}_{\mathrm{Ad}(G),\F}^W\otimes_\C L_n(\check{\g}) \right).$$
We show that it is a desired deformable family of vertex algebras. 
Since $\mathcal{D}_{\mathrm{Ad}(G),\A}^W\otimes_\C L_n(\check{\g})$ is $\A$-free by Theorem \ref{integral form of equiv W} and $\A$ is a P.I.D., the $\A$-submodule $A[\g,\kappa^*,n]_{\A}$ is $\A$-free by \cite[Theorem 5.1]{HS}.
Consider the natural map 
$$A[\g,\kappa^*,n]_{\A} \otimes_\A \C_\kappa\rightarrow (\mathcal{D}_{\mathrm{Ad}(G),\A}^W\otimes_\C L_n(\check{\g}))\otimes_\A \C_\kappa,\quad (\kappa\in \C\backslash \Q).$$
It is immediate from the definition of $A[\g,\kappa^*,n]_{\A}$ that the map is injective.
Since \eqref{generic decomposition} also holds for irrational levels as mentioned already, we obtain
\begin{align}\label{branching rule at irrational levels}
A[\g,\kappa^*,n]_{\A} \otimes_\A \C_\kappa\hookrightarrow \bigoplus_{\lambda\in Q^+} \mathbb{V}^{\kappa^*}_{\lambda^*} \otimes_\C T^\varkappa_{\lambda,0}.
\end{align}
Since $A[\g,\kappa^*,n]_{\A}$ is $\A$-free, the character remains the same under specialization, which agrees with the character of the RHS.
Therefore, \eqref{branching rule at irrational levels} is an isomorphism. 
Finally, since $\mathcal{D}_{\kappa,\mathrm{Ad}(G)}^W$ is simple \cite[Section 6]{A}, the decomposition \eqref{generic decomposition} implies that $A[\g,\kappa^*,n]$ is also simple for $\kappa^*\in \C\backslash \Q$ by \cite[Proposition 5.4]{CGN}. 
Finally, it follows from \eqref{gluing rel} and \eqref{eqCDO} that the levels $\kappa^*$ and $\varkappa$ satisfy the relation $\frac{1}{\kappa^*+h^\vee}+\frac{1}{\varkappa+h^\vee}=r^\vee n$. This completes the proof of Theorem \ref{thm: kernel object}.

\section{Proof of Theorem \ref{thm: GKO}}\label{sec: Proof of Theorem 2.3}
Suppose that $\g$ is of type $ADE$ and consider the case $n=1$ in Theorem \ref{thm: kernel object}.
Let us calculate the character of $A[\g,\kappa,1]$ for $(\mathfrak{h},L_0)$ with $L(z)=\sum_{m\in\Z}L_mz^{-m-2}$ the standard Virasoro field.
By \cite[Eq.\ (5.7)]{AF}, we have 
$$
\mathrm{ch}[T^{\kappa^*}_{\lambda^*,0}]=  \frac{q^{\frac{(\lambda^*+2\rho,\lambda)}{2(\kappa^*+h^\vee)}+(\rho,\rho)}}{(q;q)_\infty^{\mathrm{rank}\g}} \sum_{w\in W}\varepsilon(w)q^{-(w(\lambda^*+\rho),\rho)} 
$$
where $W$ is the Weyl group and $(a_1,\cdots,a_m;q)_\infty=\prod_{1\leq i\leq m, p\geq0}(1-a_iq^m)$. 
Also 
$$
\mathrm{ch}[\mathbb{V}^\kappa_{\lambda}] = \frac{1}{D}\sum_{\lambda\in Q^+}q^{\frac{(\lambda+2\rho,\lambda)}{2(\kappa+h^\vee)}}\mathrm{ch}[L_{\lambda}], 
\qquad D = (q;q)_\infty^{\mathrm{rank}\g}\prod_{\alpha\in \Delta_+}(e^\alpha q,e^{-\alpha} q;q)_\infty
$$
Then
\begin{equation}
\begin{split}
 \mathrm{ch}[A[\g,\kappa,1&]] =\sum_{\lambda\in Q^+}\mathrm{ch}[\mathbb{V}^\kappa_{\lambda}]\mathrm{ch}[T^{\kappa^*}_{\lambda^*,0}]\\
&=  \frac{1}{D}\sum_{\lambda\in Q^+}q^{\frac{(\lambda+2\rho,\lambda)}{2(\kappa+h^\vee)}}\mathrm{ch}[L_{\lambda}]\frac{q^{\frac{(\lambda^*+2\rho,\lambda^*)}{2(\kappa^*+h^\vee)}+(\rho,\rho)}}{(q;q)_\infty^{\mathrm{rank}\g}}  \sum_{w\in W}\varepsilon(w)q^{-(w(\lambda^*+\rho),\rho)}\\
& = \frac{1}{D}\frac{1}{\displaystyle  (q;q)_\infty^{\mathrm{rank}\g}  }\sum_{\lambda\in Q^+}q^{\frac{(\lambda,\lambda)}{2}}\mathrm{ch}[L_\lambda]\sum_{w\in W}\varepsilon(w)q^{(\lambda+\rho-w(\lambda+\rho),\rho)}\\
&\label{character}=\frac{1}{D }\frac{1}{(q;q)_\infty^{\mathrm{rank}\g}}\sum_{\lambda\in Q}q^{\frac{(\lambda,\lambda)}{2}}e^\lambda 
= \mathrm{ch}[V^{\kappa-1}(\g)] \, \mathrm{ch}[L_1(\g)]
\end{split}
\end{equation} 
by \cite[Theorem 4.1]{KW}. From the last equality, we find that $\mathrm{ch}[A[\g,\kappa,1]_\F]$ agrees with the character of $V^{\kappa-1}_\F(\g)\otimes_\C L_1(\g)$ for $(\mathfrak{h},L_0)$ with the diagonal $\mathfrak{h}$-action. Let $A[\g,\kappa,1]_\F=\oplus_{\Delta\geq0}A[\g,\kappa,1]_{\F,\Delta}$ denote the decomposition by the $L_0$-action. Then \eqref{character} implies that the subspaces of conformal weight one and zero admits the integral form and are isomorphic to 
\begin{align}\label{lower cwt subspaces}
A[\g,\kappa,1]_{\A,1}\simeq \g_\A\oplus \text{ad}(\g)_\A,\quad A[\g,\kappa,1]_{\A,0}\simeq \A
\end{align}
as modules over $\g_\A(:=\A\otimes_\C\g)$ corresponding to $V^\kappa_\A(\g) \subset A[\g,\kappa,1]_\A$. Here $\mathrm{ad}(\g)$ is the adjoint representation of $\g$. 
Set $T(\g) = \g[t]/(t^2)$ the Takiff algebra of $\g$. 
\begin{lemma}\label{ALemma}
Let $\g$ be a simple Lie algebra, then $\mathrm{Hom}_{\g_R}(\wedge^2 \mathrm{ad}(\g)_R, \mathrm{ad}(\g)_R) \simeq R$ for any $\C$-algebra $R$. 
\end{lemma}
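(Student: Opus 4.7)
The plan is to reduce the lemma to the classical $R = \C$ case and then extend scalars. The main input is the fact that over $\C$ the adjoint representation of a simple Lie algebra appears with multiplicity exactly one in its own exterior square, and the two steps are organized accordingly.

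First I would check that $\mathrm{Hom}_\g(\wedge^2 \g, \g) \simeq \C$, with the isomorphism generated by the Lie bracket $[\cdot,\cdot] : \wedge^2 \g \to \g$. Since $\g$ is simple, its adjoint representation is irreducible, so by Schur's lemma this amounts to the assertion that the adjoint occurs in $\wedge^2 \g$ with multiplicity exactly one. The bracket provides one copy, so the content is the upper bound. A uniform argument proceeds by analyzing highest-weight vectors: any $\g$-equivariant $\phi : \wedge^2 \g \to \g$ is determined by an $\mathfrak{n}_+$-invariant vector in the weight-$\theta$ subspace of $\wedge^2 \g$ (where $\theta$ is the highest root), and a direct root-theoretic computation in the subspace spanned by $\mathfrak{h} \wedge e_\theta$ together with the $e_\alpha \wedge e_{\theta-\alpha}$ (with $\alpha,\theta-\alpha \in \Delta_+$) shows that the space of such highest-weight vectors is one-dimensional.

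For general $R$, I would identify $\wedge^2 \mathrm{ad}(\g)_R = R \otimes_\C \wedge^2 \g$ and $\mathrm{ad}(\g)_R = R \otimes_\C \g$ as $\g_R$-modules with action concentrated on the $\g$-factor. Any $R$-linear, $\g_R$-equivariant map $R \otimes_\C \wedge^2 \g \to R \otimes_\C \g$ is determined by its restriction to $1 \otimes \wedge^2 \g$, which is a $\C$-linear, $\g$-equivariant map $\wedge^2 \g \to R \otimes_\C \g$. Because $\wedge^2 \g$ is finite-dimensional over $\C$, the natural map
\[
R \otimes_\C \mathrm{Hom}_\g(\wedge^2 \g, \g) \longrightarrow \mathrm{Hom}_\g(\wedge^2 \g, R \otimes_\C \g)
\]
is an isomorphism. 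Combining with the previous step yields $\mathrm{Hom}_{\g_R}(\wedge^2 \mathrm{ad}(\g)_R, \mathrm{ad}(\g)_R) \simeq R \otimes_\C \C \simeq R$, with the generator given by the $R$-linear extension of the bracket.

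The main obstacle is the multiplicity-one statement over $\C$; once that is in hand, the base-change step is routine, requiring only the finite-dimensionality of $\wedge^2 \g$ and that $R$ is flat over the field $\C$.
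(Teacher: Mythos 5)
Your overall structure is sound and matches the paper's: reduce to $R=\C$ by flat base change (the paper compresses this to ``it suffices to show $\mathrm{Hom}_\g(\wedge^2\mathrm{ad}(\g),\mathrm{ad}(\g))\simeq\C$''; your explicit justification via finite-dimensionality of $\wedge^2\g$ is correct and worth spelling out), and then establish that the adjoint representation occurs with multiplicity exactly one in $\wedge^2\g$. Where you diverge is in the proof of this multiplicity-one statement, which is the entire content of the lemma. The paper argues via vertex algebras: realize $\wedge^2\mathrm{ad}(\g)$ as the conformal-weight-one subspace $V_1$ of the free-fermion vertex superalgebra in the adjoint representation, use the conformal embedding $L_{h^\vee}(\g)\hookrightarrow V$ to write $V_1\simeq\mathrm{ad}(\g)\oplus M$ with $M$ a sum of top spaces of $L_{h^\vee}(\g)$-modules of conformal weight one, and rule out further adjoint summands by comparing Casimir eigenvalues ($C_M=4h^\vee\neq 2h^\vee=C_{\mathrm{ad}(\g)}$). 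This is short, uniform in type, and completely self-contained. You instead propose a direct analysis of $\mathfrak{n}_+$-invariant vectors of weight $\theta$ in the span of $\mathfrak{h}\wedge e_\theta$ and the $e_\alpha\wedge e_{\theta-\alpha}$. That route does lead to the result (the statement is a classical fact), but as written you have only asserted the outcome of the ``direct root-theoretic computation'': carrying it out uniformly for all simple types requires controlling structure constants and the combinatorics of decompositions $\theta=\alpha+(\theta-\alpha)$, which is not a one-line check (e.g.\ for $E_8$ there are many such pairs). To make your version complete you should either execute that computation in a type-independent way or cite the classical multiplicity-one statement; otherwise the key step remains an assertion, whereas the paper's Casimir argument proves it outright.
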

\begin{proof}
It suffices to show $\mathrm{Hom}_\g(\wedge^2 \mathrm{ad}(\g), \mathrm{ad}(\g)) \simeq \mathbb C$.
Let $V$ be the vertex superalgebra of free fermions in the adjoint representation, so that $V_1 \simeq \wedge^2 \text{ad}(\g)$ as $\g$-modules for the conformal weight one subspace $V_1$.
By \cite[Chapter 3.7]{F}, there is a conformal embedding of  $L_{h^\vee}(\g)\hookrightarrow V$ and thus $V_1 \simeq \text{ad}(\g) \oplus M$ for an integrable $\g$-module $M$. Then $M$ must be the top subspace of an $L_{h^\vee}(\g)$-module of conformal weight  $C_M / (4h^\vee) =1$, with $C_M$ the Casimir eigenvalue of $M$, i.e. $C_M = 4h^\vee$. 
Since $C_{\text{ad}(\g)} = 2h^\vee$, $\mathrm{ad}(\g)$ does not appear in $M$ as a direct summand. 
\end{proof}
\begin{lemma}\label{BLemma}
Given an integral domain $R$, let $\g_R \subset \ga$ be Lie algebras over $R$ such that $\g$ is as above and $\ga \simeq  \g_R \oplus \mathrm{ad}(\g)_R$ as $\g_R$-modules. Then, $\ga_S$ is isomorphic to $\g_S \oplus \g_S$ or $T(\g)_S$ as Lie algebras (over $S$) for some finitely generated $R$-algebra $S$.
\end{lemma}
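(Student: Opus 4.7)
The plan is to determine all Lie algebra structures on the $\g_R$-module $\g_R\oplus \mathrm{ad}(\g)_R$ that extend the inclusion $\g_R\hookrightarrow\ga$, classify them up to $\g_R$-equivariant changes of the complement to $\g_R$, and match the resulting normal forms with $T(\g)$ and $\g\oplus\g$ after a suitable base change. Fix the decomposition $\ga\simeq \g_R\oplus\mathrm{ad}(\g)_R$ and write $e_1,e_2$ for the inclusions of $\g_R$ into the two summands. The bracket inside $\g_R$ is prescribed by the embedding, and the bracket between $\g_R$ and $\mathrm{ad}(\g)_R$ is the adjoint action by $\g_R$-equivariance, so the only free datum is the $\g_R$-equivariant bracket $\wedge^2\mathrm{ad}(\g)_R\to\g_R\oplus\mathrm{ad}(\g)_R$. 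By Lemma~\ref{ALemma} the space of such maps to each summand is free of rank one over $R$ with generator the intrinsic $\g$-bracket, so
\[
[e_2(x),e_2(y)]_{\ga}=\alpha\,e_1([x,y]_{\g})+\beta\,e_2([x,y]_{\g}),\qquad \alpha,\beta\in R.
\]
A direct check shows that the ternary Jacobi identities impose no further relation on $(\alpha,\beta)$: in each nontrivial case the cyclic sum, after factoring out $\alpha$, $\beta$, $\alpha\beta$ or $\beta^2$, reduces to the Jacobi identity of $\g$.

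Next I would analyze the $\g_R$-equivariant automorphisms of $\ga$ fixing $\g_R$ pointwise, which are exactly $e_2(x)\mapsto \lambda e_2(x)+\mu e_1(x)$ with $\lambda\in R^\times$, $\mu\in R$. A short calculation yields the transformation rule
\[
(\alpha,\beta)\longmapsto\bigl(\lambda^2\alpha-\mu^2-\lambda\mu\beta,\;\lambda\beta+2\mu\bigr).
\]
Setting $\mu=-\lambda\beta/2$ (using $1/2\in R$) normalizes $\beta$ to $0$ and converts $\alpha$ into $\lambda^2\gamma$, where $\gamma:=\alpha+\beta^2/4\in R$ is the essential invariant of $\ga$, well-defined modulo squares of units.

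Two cases then close the proof. If $\gamma=0$ in $R$, take $S=R$; the bracket is reducible to $(\alpha,\beta)=(0,0)$, so $\mathrm{ad}(\g)_R$ is an abelian ideal with $\g_R$ acting adjointly, and $(g,m)\mapsto g+mt$ identifies $\ga$ with $T(\g)_R$. If $\gamma\neq 0$, set $S=R[u,u^{-1}]/(u^2-\gamma)$, a finitely generated $R$-algebra in which $\gamma=u^2$ is a unit square; the localization is valid because $u$ is not a zero divisor in $R[u]/(u^2-\gamma)$, as $R$ is an integral domain with $\gamma\neq 0$. Choosing $\lambda=u^{-1}$ brings the pair to $(1,0)$ over $S$, and in that normal form the explicit map $(g,m)\mapsto(g+m,\,g-m)$ is a Lie algebra isomorphism $\ga_S\xrightarrow{\sim}\g_S\oplus\g_S$. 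The main step that requires care is verifying Jacobi places no constraint on $(\alpha,\beta)$; once that is in hand the classification is clean and $\ga_S$ falls into exactly the two advertised families.
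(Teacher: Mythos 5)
Your proof is correct and follows essentially the same route as the paper's: both reduce the bracket on the complement to a pair $(\alpha,\beta)$ via Lemma \ref{ALemma} and split into cases according to whether the discriminant $4\alpha+\beta^2$ (your $4\gamma$) vanishes, adjoining an invertible square root in the nonvanishing case. The only cosmetic difference is that you first normalize $(\alpha,\beta)$ by a $\g_R$-equivariant change of complement and then exhibit the isomorphism in normal form, whereas the paper writes down the maps $\varphi_\pm$ directly.
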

\begin{proof}
Fix a decomposition $\ga=\g_{1,R}\oplus \g_{2,R}$ where $\g_{i,R}$ is a copy of $\g_R$ so that 
$[x_1,y_1]=[x,y]_1$, $[x_1,y_2]=[x,y]_2$, ($x,y\in \g_R$). Then by Lemma \ref{ALemma}, there exist $\alpha,\beta\in R$ such that 
$[x_2,y_2]=\alpha[x,y]_1+\beta[x,y]_2.$
(i) $4\alpha+\beta^2=0$: one can show that the map
$$\varphi\colon \g_R\rightarrow \ga,\quad x\mapsto -\beta/2 x_1+x_2$$
is an embedding of $\g_{1,R}$-modules such that $[\varphi(x),\varphi(y)]=0$. Therefore, $\ga\simeq T(\g)_R$ as Lie algebras in this case. 
(ii) $4\alpha+\beta^2\neq0$: set $S:=R(\sqrt{4\alpha+\beta^2})$. One can show that the maps 
$$\varphi_\pm\colon \g_S\rightarrow \ga_S,\quad x\mapsto (p_\pm x_1+x_2)/(2p_\pm+\beta),\quad p_\pm=(-\beta\pm \sqrt{4\alpha+\beta^2})/2,$$
are Lie algebra homomorphisms over $S$ and satisfy 
$$[x_1,\varphi_\pm(y)]=\varphi_\pm([x,y]),\quad [x_2,\varphi_\pm(y)]=(p_\pm+\beta)\varphi_\pm([x,y]),$$
i.e. $\mathrm{Im}\varphi_\pm$ are ideals.
Thus $(\varphi_+,\varphi_-)\colon \g_S\oplus \g_S \simeq \ga_S$ as Lie algebras over $S$.
\end{proof}
Let us consider the space of symmetric invariant bilinear forms on $T(\g)$, i.e. $\mathbf{B}(T(\g))=\mathrm{Hom}_{T(\g)}(\mathrm{Sym}^2(T(\g)),\C)$. 
Since $\mathbf{B}(\g)=\C \kappa_0$ with $\kappa_0$ the standard form on $\g$, $\mathrm{Hom}_{\g}(\mathrm{Sym}^2(T(\g)),\C)$ is spanned by 
$\kappa_{a}$, $\kappa_{b}$, $\kappa_{c}$, which are $\kappa_0$ on the direct summands $\g_1\otimes \g_1$, $\g_1\otimes \g_2$, $\g_2\otimes \g_2$, respectively, where $T(\g)=\g\oplus \g t=\g_1\oplus \g_2$. By the $T(\g)$-invariance, we find $\mathbf{B}(T(\g))=\C\kappa_{a}\oplus \C\kappa_{b}$.
Note that $k_a\kappa_{a} + k_b \kappa_{b}$ is non-degenerate if and only if $k_b \neq 0$ and that we have an isomorphism of affine vertex algebras 
$V^{k_a\kappa_{a} + k_b \kappa_{b}}(T(\g))\simeq V^{k_a(\kappa_{a} + \kappa_{b})}(T(\g))$ for $k_ak_b\neq0$ induced by the automorpisms $\C^*$ of $T(\g)$ via the scalings on $\g t$. Note that this argument also folds over field extending $\C$, in particular $\F$.
\begin{corollary}\label{ACor}
There is an isomorphism $A[\g,\kappa,1]_{S,1}\simeq \g_S\oplus \g_S$ of Lie algebras where $S$ is taken as in Lemma \ref{BLemma}.
\end{corollary}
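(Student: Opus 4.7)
The plan is to apply Lemma \ref{BLemma} to the $\g_\A$-module $\ga := A[\g, \kappa, 1]_{\A, 1}$, which by \eqref{lower cwt subspaces} is a Lie algebra containing $\g_\A$ as a subalgebra and decomposing as $\g_\A \oplus \mathrm{ad}(\g)_\A$ on the $\g_\A$-module level. The lemma immediately yields a dichotomy: over a suitable finitely generated $\A$-algebra $S$, either $\ga_S \simeq \g_S \oplus \g_S$ or $\ga_S \simeq T(\g)_S$ as Lie algebras. The entire content of the corollary is to rule out the Takiff case.

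The distinguishing feature I would exploit is the invariant bilinear form on $\ga_S$ coming from the $(1)$-st product of the vertex algebra $A[\g, \kappa, 1]_S$: this gives a $\g_S$-equivariant symmetric pairing
\[
\langle -, - \rangle \colon \ga_S \otimes_S \ga_S \longrightarrow A[\g, \kappa, 1]_{S, 0} = S.
\]
If $\ga_S \simeq T(\g)_S$, then by the discussion preceding the corollary this pairing lies in $\mathbf{B}(T(\g)) = \C \kappa_a \oplus \C \kappa_b$, and in particular vanishes identically on $\mathrm{ad}(\g)_S \otimes \mathrm{ad}(\g)_S$. By Schur's lemma, the restriction to the second summand must equal $c \cdot (-, -)$ for a single scalar $c \in S$, so the corollary reduces to showing $c \neq 0$.

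To compute $c$, I would use the decomposition \eqref{branching rule at irrational levels} together with the fact that the highest root is self-dual in type $ADE$, so $\theta^* = \theta$. Under this identification, $\mathrm{ad}(\g)_S$ sits as the top subspace of the self-paired block $\mathbb{V}^\kappa_\theta \otimes T^{\kappa^*}_{\theta, 0}$, and $c$ is extracted from the projection of the $(1)$-product onto the vacuum block $V^\kappa(\g) \otimes \W^{\kappa^*}(\g)$. This projection factorizes into the product of the standard intertwiner $\mathbb{V}^\kappa_\theta \boxtimes \mathbb{V}^\kappa_\theta \to V^\kappa(\g)$ on the affine side with its counterpart $T^{\kappa^*}_{\theta, 0} \boxtimes T^{\kappa^*}_{\theta, 0} \to \W^{\kappa^*}(\g)$ on the $\W$-algebra side, each of which is one-dimensional and non-trivial for generic $\kappa$ by fusion rules and the self-duality of the adjoint representation.

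The hard part is the explicit non-vanishing of $c$ at generic $\kappa$. My first attempt would be to normalize the two intertwiners by tracing through the embedding $A[\g, \kappa, 1]_\A \hookrightarrow \mathcal{D}_{\mathrm{Ad}(G), \A}^W \otimes L_1(\check{\g})$ and the Urod-type decomposition used in Section \ref{proof1}. A cleaner indirect route would be to invoke the simplicity of $A[\g, \kappa, 1]_\F$ at irrational levels (Theorem \ref{thm: kernel object}): a vanishing $c$ together with the Takiff bracket structure should produce either a proper nonzero ideal in $A[\g, \kappa, 1]_\F$ or an obstruction to the existence of the extension, contradicting the structure derived in Section \ref{proof1}. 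Either route concludes that $c \neq 0$, which by Lemma \ref{BLemma} case~(ii) gives $\ga_S \simeq \g_S \oplus \g_S$.
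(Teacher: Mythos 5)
Your reduction to Lemma \ref{BLemma} and the idea of using the invariant bilinear form are on the right track, but the proof has a genuine gap at exactly the point you flag as ``the hard part'': the non-vanishing of $c$ is never established, and neither of your two proposed routes closes it. Route (b) in particular cannot work as stated. By Li's theorem, simplicity of the conic vertex algebra $A[\g,\kappa,1]_\F$ gives a \emph{non-degenerate} invariant form on the weight-one space, but as the discussion before the corollary shows, $\mathbf{B}(T(\g))=\C\kappa_a\oplus\C\kappa_b$ and $k_a\kappa_a+k_b\kappa_b$ is non-degenerate precisely when $k_b\neq0$ --- while vanishing identically on $\g t\otimes \g t$. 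In other words, a non-degenerate invariant form on the Takiff algebra necessarily has $c=0$ (it pairs $\g$ against $\g t$), so simplicity produces no contradiction with $c=0$ and no ``proper nonzero ideal.'' Vanishing of $c$ is forced by, not in tension with, the Takiff hypothesis; you would have to compute $c$ independently via your route (a), i.e.\ by explicitly normalizing the intertwiners through the embedding into $\mathcal{D}_{\mathrm{Ad}(G),\A}^{W}\otimes L_n(\check\g)$, which is a substantial computation you have not carried out.

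The paper extracts a different consequence from the same non-degeneracy: it only concludes $k_b\neq0$, so that after rescaling one gets a map from $V^{\kappa(\kappa_a+\kappa_b)}(T(\g))$, which is \emph{simple} at generic $\kappa$ by \cite[Theorem 3.6]{CL1}. Hence the Takiff affine vertex algebra would embed into $A[\g,\kappa,1]$, and the contradiction is a character count: $\mathrm{ch}[V^{\kappa(\kappa_a+\kappa_b)}(T(\g))]=\mathrm{ch}[V^\kappa(\g)]^2$ does not fit inside $\mathrm{ch}[A[\g,\kappa,1]]=\mathrm{ch}[V^{\kappa-1}(\g)]\,\mathrm{ch}[L_1(\g)]$. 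If you want to keep your framework, replace the attempt to show $c\neq0$ with this size argument; otherwise you must actually perform the intertwiner normalization of route (a).
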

\begin{proof}
By Lemma \ref{BLemma}, $A[\g,\kappa,1]_{S,1}$ is isomorphic to $\g_S\oplus \g_S$ or $T(\g)_S$. 
Suppose $A[\g,\kappa,1]_{S,1}\simeq T(\g)_S$, in which case $A[\g,\kappa,1]_{\A,1}\simeq T(\g)_\A$ holds by the proof of Lemma \ref{BLemma}.
Then we have a non-zero homomorphism $V^{\kappa \kappa_{a}+k_b \kappa_{b}}_\A(T(\g))\rightarrow  A[\g,\kappa,1]_\A$ for some $k_b\in \A$.
Since $A[\g,\kappa,1]_\F$ is simple, by \eqref{lower cwt subspaces}, $A[\g,\kappa,1]_\F$ is conic and thus it admits a non-degenerate pairing \cite{Li} that descends to a non-degenerate symmetric invariant bilinear form on $A[\g,\kappa,1]_{\A,1}\simeq T(\g)_\A$, i.e. $k_b \neq 0$.
It follows that $V^{\kappa\kappa_{a} + k_b \kappa_{b}}_{\A^\circ}(T(\g))\simeq V^{\kappa(\kappa_a +  \kappa_b)}_{\A^\circ}(T(\g))$ with $\A^\circ=\A[\frac{1}{k_b}]$, whose specialization at generic $\kappa$ is simple by \cite[Theorem 3.6 (1) with $f=0$]{CL1}. Hence $V^{\kappa(\kappa_{a} +  \kappa_{b})}(T(\g))$ is a subalgebra of $A[\g,\kappa,1]$ for such $\kappa$, a contradiction since
$\mathrm{ch}[V^{\kappa(\kappa_{a} +  \kappa_{b})}(T(\g))] = \mathrm{ch}[V^\kappa(\g)]^2 \not \leq \mathrm{ch}[V^{\kappa-1}(\g)\otimes L_1(\g)] = \mathrm{ch}[A[\g,\kappa,1]]$.
\end{proof}
\proof[Proof of Theorem \ref{thm: GKO}]
By Corollary \ref{ACor}, we have a homomorphism
\begin{align}\label{homomorphism}
V^{\kappa_1}_S(\g)\otimes_S V^{\kappa_2}_S(\g)\rightarrow A[\g,\kappa,1]_S,\quad \kappa_1+\kappa_2=\K,
\end{align}
of vertex algebras over $S$ for some $\kappa_i\in S$. Here $S$ is a finitely generated integral domain over $\A$.
We claim that either $\kappa_1 = 1$ or $\kappa_2 =1$. 
Since $L_1(\g)$ has a non-zero singular vector of conformal weight two and of weight $2\theta$ where $\theta$ is the highest root of $\g$. 
It follows from \eqref{character} that the same is true for $A[\g,\kappa,1]_S$ and thus for $V^{\kappa_1}_S(\g)\otimes_S V^{\kappa_2}_S(\g)$ by \eqref{homomorphism}.
Let $e^a(z), e^b(z)$ (resp. $f^a(z), f^b(z)$) denote the fields corresponding to the highest (resp. lowest) root vector for the first and second factor.
Then the singular vector inside $V^{\kappa_1}_S(\g)\otimes_S V^{\kappa_2}_S(\g)$ must be of the form
$$
x =  \alpha  e_{-1}^a e_{-1}^a  |0\rangle  +  \beta e_{-1}^a e_{-1}^b  |0\rangle   +\gamma  e_{-1}^b e_{-1}^b  |0\rangle
$$
for some $\alpha, \beta, \gamma\in S$. Since $x$ satisfies $f^c_1f^d_1 x = 0$ for $c, d \in \{ a, b \}$, we have
\begin{itemize}
\item $f_1^af_1^ax=0$ implies either $\alpha =0$ or $\kappa_1 \in \{0, 1\}$.
\item  $f_1^bf_1^bx=0$ implies either $\gamma =0$ or $ \kappa_2 \in \{ 0, 1\}$.
\item $f_1^af_1^b x=0$ implies either $\beta =0$ or $\kappa_1\kappa_2 = 0$.
\end{itemize}
Since $V^0_S(\g)$ has a singular vector at conformal weight one, but not at weight two, we have $\kappa_1,\kappa_2\neq0$, which implies either $\kappa_1 =1$ or $\kappa_2 =1$ holds. Therefore, \eqref{homomorphism} factors through 
\begin{align*}
g_S\colon V^{\kappa-1}_S(\g)\otimes_\C L_1(\g)\rightarrow A[\g,\kappa,1]_S.
\end{align*}
By specializing at generic $\kappa\in \C\backslash \Q$, we obtain an isomorphism  
$$V^{\kappa-1}(\g)\otimes_\C L_1(\g)\xrightarrow{\simeq} A[\g,\kappa,1]$$
by the simplicity of $V^{\kappa-1}(\g)\otimes_\C L_1(\g)$ and the coincidence of the characters \eqref{character}.
This completes the proof.
\endproof

\end{document}